\newtheorem{theorem}{Theorem}[section]
\newtheorem{proposition}[theorem]{Proposition}
\newtheorem{lemma}[theorem]{Lemma}
\newtheorem{corollary}[theorem]{Corollary}
\theoremstyle{definition}
\newtheorem{definition}[theorem]{Definition}
\newtheorem{example}[theorem]{Example}
\numberwithin{equation}{section}
\begin{document}

\author{Xueru Wu,  Yao Ma, Liangyun Chen$^*$}
\address{School of Mathematics and Statistics, Northeast Normal University,
Changchun, 130024, P.R.China}
\email{wuxr884@nenu.edu.cn.}
\address{School of Mathematics and Statistics, Northeast Normal University,
Changchun, 130024, P.R.China}
\email{may703@nenu.edu.cn.}

\address{School of Mathematics and Statistics, Northeast Normal University,
Changchun, 130024, P.R.China}
\email{chenly640@nenu.edu.cn.}

\thanks{*Corresponding author.}

\thanks{\emph{MSC}(2020). 17A40, 17B56, 17B10, 17B38.}
\thanks{\emph{Key words and phrases}. Lie triple system, cohomology, relative Rota-Baxter operator, deformation.}

\thanks{Supported by NNSF of China (No. 12071405).}

\title{Relative Rota-Baxter operators of nonzero weights on Lie Triple Systems}

\begin{abstract}
In this paper, we introduce the notion of a relative Rota-Baxter operator of weight $\lambda$ on a Lie triple system with respect to an action on another Lie triple system, which can be characterized by the graph of their semidirect product. We also establish a cohomology theory for a relative Rota-Baxter operator of weight $\lambda$ on Lie triple systems and use the first cohomology group to classify infinitesimal deformations.
\end{abstract}
\maketitle

\section{Introduction}

Lie triple systems were introduced by Cartan in his studies on Riemannian geometry. Since then, Jacobson studied Lie triple systems by an algebraic method in \cite{Jac1,Jac}. The present formulation is due to Yamaguti \cite{Yam}. Lie triple systems have important applications in physics, such as in quantum mechanics and numerical analysis of differential equations. For more results see \cite{CHMM,Ku,Zhang1}. People often refer to Lie algebras when considering the related problems of Lie triple systems, since Lie triple systems are closely related to Lie algebras.

Rota-Baxter associative algebras originated from the probability study in \cite{Baxter}. In the Lie algebra context, Kupershmidt introduced the notion of an $\mathcal{O}$-operator (also called relative Rota-Baxter operator) in \cite{Kupershmidt} to better understand the classical Yang-Baxter equation. (Relative) Rota-Baxter operators on Lie algebras and associative algebras have important applications in various fields, such as the classical Yang-Baxter equation and integrable systems \cite{Bai,Kupershmidt}, splitting of operads \cite{BBGN,PBG}, double Lie algebras \cite{GK}, and etc. In \cite{Kupershmidt}, the author showed the concept of $\mathcal{O}$-operators (relative Rota-Baxter operator) on Lie algebras.  Then  $\mathcal{O}$-operators have been used to define cohomology on 3-Lie algebras, Lie triple system and Lie-Yamaguti algebras, see \cite{CHMM,SZ,THS}.

In \cite{Gerstenhaber}, the author introduced the deformation of algebraic structures in his study associative algebras. Then, the deformation theory was then extended to Lie algebras \cite{NR}, Hom-type algebras \cite{MS}, $3$-Lie algebras \cite{Figueroa} and Lie triple systems \cite{YBB}. Recently, deformations of morphisms and Rota-Baxter operators were deeply studied \cite{ABM,Das,FZ,TBGS,WZ}. Also, (Relative) Rota-Baxter operators of nonzero weights on 3-Lie algebras and matrix
algebras of order three have been introduced in \cite{BGLW,GG,HSZ}. The main purpose of this paper is to introduce the notion of a relative Rota-Baxter operators of nonzero weight $\lambda$ from a Lie triple system $\mathfrak{L}'$ to a Lie triple system $\mathfrak{L}$ with respect to an action $\theta$, and characterize it using the graph of the semidirect product Lie triple systems.

The paper is organized as follows. In Section 2, we introduce the notion of a relative Rota-Baxter operator of weight $\lambda$ from a Lie triple system $\mathfrak{L}'$ to a Lie triple system $\mathfrak{L}$ with respect to an action $\theta$. In Section 3, we establish a cohomology theory for relative Rota-Baxter operator of weight $\lambda$ on Lie triple systems and classify infinitesimal deformations using the first cohomology group.

In this paper, all Lie triple systems are defined over an arbitrary field $\mathbb{F}$ of characteristic 0.

\section{Relative Rota-Baxter operators of weight $\lambda$ on Lie triple systems}

In this section, we first introduce the notion of actions of Lie triple systems, which give rise to the semidirect product Lie triple systems. Then we introduce the notion of relative Rota-Baxter operators of weight $\lambda$ on Lie triple system, which can be characterized by the graphs of the semidirect product Lie triple systems. Finally, we establish the relation between relative Rota-Baxter operators of weight $\lambda$ on Lie triple systems and Nijenhuis operators on Lie triple systems.
\begin{definition}{\rm\cite{Jac1}}
A Lie triple system is a vector space $\mathfrak{L}$ endowed with a trilinear operation $ [ \cdot, \cdot, \cdot ]_\mathfrak{L}: \mathfrak{L}\times \mathfrak{L}\times \mathfrak{L}\longrightarrow \mathfrak{L}$ satisfying
\begin{align}
&[a,a,b]_\mathfrak{L}=0,\label{392.1}\\
&[a,b,c]_\mathfrak{L}+[b,c,a]_\mathfrak{L}+[c,a,b]_\mathfrak{L}=0,\label{392.2}\\
&[a,b,[c,d,e]_\mathfrak{L}]_\mathfrak{L}=[[a,b,c]_\mathfrak{L},d,e]_\mathfrak{L}
+[c,[a,b,d]_\mathfrak{L},e]_\mathfrak{L}+[c,d,[a,b,e]_\mathfrak{L}]_\mathfrak{L},\label{392.3}
\end{align}
for all $a, b, c, d, e \in \mathfrak{L}$.
\end{definition}

\begin{definition}{\rm\cite{Yam}}\label{def392.2}
An $\mathfrak{L}$-module is a vector space $V$ with a bilinear map
\begin{align*}
\theta:\mathfrak{L} \times \mathfrak{L}& \longrightarrow {\rm End}(V)\\
                                 (a,b)& \longrightarrow \theta(a,b),
\end{align*}
such that the following conditions hold:
\begin{align*}
&\theta(c,d)\theta(a,b)-\theta(b,d)\theta(a,c)-\theta(a,[b,c,d]_\mathfrak{L})+D(b,c)\theta(a,d)=0,\\
&\theta(c,d)D(a,b)-D(a,b)\theta(c,d)+\theta([a,b,c]_\mathfrak{L},d)+\theta(c,[a,b,d]_\mathfrak{L})=0,
\end{align*}
where
\begin{align*}
D(a,b)=\theta(b,a)-\theta(a,b),
\end{align*}
for all $a,b,c,d\in \mathfrak{L}.$ Also $\theta$ is called a representation of $\mathfrak{L}$ on $V.$
\end{definition}

In particular, if
\begin{align*}
\theta:\mathfrak{L} \times \mathfrak{L}& \longrightarrow {\rm End}(\mathfrak{L})\\
                                 (a,b)& \longrightarrow \theta(a,b),
\end{align*}
with $\theta(a,b)c=[c,a,b]_\mathfrak{L},$ and $D(a,b)c=\theta(b,a)c-\theta(a,b)c=[a,b,c]_\mathfrak{L}.$ Then $\theta$ is called an adjoint representation of $\mathfrak{L}$ on itself.

Now, let us give the definition of a relative Rota-Baxter operator of weight $\lambda\in \mathbb{F}$ from a Lie triple system to another Lie triple system. We recall a derived algebra of a Lie triple system $(\mathfrak{L},[\cdot,\cdot,\cdot]_\mathfrak{L})$ is a subsystem $[\mathfrak{L},\mathfrak{L},\mathfrak{L}]_\mathfrak{L}$, and denoted by $\mathfrak{L}^1$. The subspace
\begin{align*}
C(\mathfrak{L})=\{x\in \mathfrak{L}~|~[x,y,z]_\mathfrak{L}=0,~\forall~y,z\in \mathfrak{L}\}
\end{align*}
is the center of $(\mathfrak{L},[\cdot,\cdot,\cdot]_\mathfrak{L})$.

\begin{definition}\label{def2112.4}
Let $(\mathfrak{L},[\cdot,\cdot,\cdot]_{\mathfrak{L}})$ and $(\mathfrak{L}^\prime,[\cdot,\cdot,\cdot]_{\mathfrak{L}^\prime})$ be two Lie triple systems. Let $\theta:\mathfrak{L}\times\mathfrak{L}\rightarrow {\rm End}(\mathfrak{L}^\prime)$ be a representation of the Lie triple system $\mathfrak{L}$ on the vector space $\mathfrak{L}^\prime$. If for all $x,y\in \mathfrak{L}$, $u,v,w\in \mathfrak{L}^\prime$,
\begin{align*}
\theta(x,y)u\in C(\mathfrak{L}^\prime),\\
\theta(x,y)[u,v,w]_{\mathfrak{L}'}=0,
\end{align*}
then $\theta$ is called an action of $(\mathfrak{L},[\cdot,\cdot,\cdot]_{\mathfrak{L}})$ on $(\mathfrak{L}^\prime,[\cdot,\cdot,\cdot]_{\mathfrak{L}^\prime})$. We denote an action by $(\mathfrak{L}^\prime,\theta)$.
\end{definition}

\begin{example}
Let $(\mathfrak{L},[\cdot,\cdot,\cdot]_{\mathfrak{L}})$ be a Lie triple system. If $\mathfrak{L}$ satisfies $\mathfrak{L}^1\in C(\mathfrak{L})$, then the adjoint representation $\theta:\mathfrak{L}\times\mathfrak{L}\rightarrow {\rm End}(\mathfrak{L})$ is an action of $\mathfrak{L}$ on itself.
\end{example}

\begin{definition}
Let $\theta:\mathfrak{L}\times\mathfrak{L}\rightarrow {\rm End}(\mathfrak{L}')$ be an action of a Lie triple system $(\mathfrak{L},[\cdot,\cdot,\cdot]_{\mathfrak{L}})$ on a Lie triple system $(\mathfrak{L}^\prime,[\cdot,\cdot,\cdot]_{\mathfrak{L}^\prime})$. A linear map $T:\mathfrak{L}^\prime\rightarrow \mathfrak{L}$ is called a relative Rota-Baxter operator of weight $\lambda\in \mathbb{F}$ from a Lie triple system $(\mathfrak{L}^\prime,[\cdot,\cdot,\cdot]_{\mathfrak{L}^\prime})$ to a Lie triple system $(\mathfrak{L},[\cdot,\cdot,\cdot]_{\mathfrak{L}})$ with respect to an action $\theta$ if
\begin{align}\label{2112.7}
[Tu,Tv,Tw]_{\mathfrak{L}}=T\Big(D(Tu,Tv)w-\theta(Tu,Tw)v+\theta(Tv,Tw)u+\lambda[u,v,w]_{\mathfrak{L}^\prime}\Big),
\end{align}
for all $u,v,w\in \mathfrak{L}^\prime$.
\end{definition}

\begin{definition}
Let $T$ and $T^\prime$ be two relative Rota-Baxter operators of weight $\lambda$ from a Lie triple system $(\mathfrak{L}^\prime,[\cdot,\cdot,\cdot]_{\mathfrak{L}^\prime})$ to a Lie triple system $(\mathfrak{L},[\cdot,\cdot,\cdot]_{\mathfrak{L}})$ with respect to an action $\theta$. A homomorphism from $T$ to $T^\prime$ consists of homomorphisms $\psi_{\mathfrak{L}}:\mathfrak{L} \rightarrow \mathfrak{L}$ and $\psi_{\mathfrak{L}^\prime}:\mathfrak{L}^\prime \rightarrow \mathfrak{L}^\prime$ such that
\begin{align}
\psi_{\mathfrak{L}}\circ T=& T^\prime\circ \psi_{\mathfrak{L}^\prime},\label{2112.8}\\
\psi_{\mathfrak{L}^\prime}(\theta(x,y)u)=&\theta(\psi_{\mathfrak{L}}(x),\psi_{\mathfrak{L}}(y))\psi_{\mathfrak{L}^\prime}(u)
,~~\forall x,y\in \mathfrak{L},~u\in \mathfrak{L}^\prime\label{2112.9}\\
\psi_{\mathfrak{L}^\prime}(D(x,y)u)=&D(\psi_{\mathfrak{L}}(x),\psi_{\mathfrak{L}}(y))\psi_{\mathfrak{L}^\prime}(u),\label{2112.10}
~~\forall x,y\in \mathfrak{L},~u\in \mathfrak{L}^\prime.
\end{align}
In particular, if both $\psi_{\mathfrak{L}}$ and $\psi_{\mathfrak{L}^\prime}$ are invertible, $(\psi_{\mathfrak{L}},\psi_{\mathfrak{L}^\prime})$ is called an isomorphism from $T$ to $T^\prime$.
\end{definition}

Let $\theta:\mathfrak{L}\times\mathfrak{L}\rightarrow {\rm End}(\mathfrak{L}^\prime)$ be an action of a Lie triple system $(\mathfrak{L},[\cdot,\cdot,\cdot]_{\mathfrak{L}})$ on a Lie triple system $(\mathfrak{L}^\prime,[\cdot,\cdot,\cdot]_{\mathfrak{L}^\prime})$. Define $[\cdot,\cdot,\cdot]_{\theta}$ on $\mathfrak{L}\oplus \mathfrak{L}^\prime$ by
\begin{align*}
[x+u,y+v,z+w]_{\theta}=[x,y,z]_{\mathfrak{L}}+D(x,y)w+\theta(y,z)u-\theta(x,z)v+\lambda[u,v,w]_{\mathfrak{L}^\prime},
\end{align*}
for all $x,y,z\in \mathfrak{L}$, $u,v,w\in \mathfrak{L}^\prime$.

\begin{proposition}
Keep notations as  above, then $(\mathfrak{L}\oplus \mathfrak{L}^\prime,[\cdot,\cdot,\cdot]_{\theta})$ is a Lie triple system, which is called the semidirect product of the Lie triple system $(\mathfrak{L},[\cdot,\cdot,\cdot]_{\mathfrak{L}})$ on the Lie triple system $(\mathfrak{L}^\prime,[\cdot,\cdot,\cdot]_{\mathfrak{L}^\prime})$ with respect to the action $\theta$, and denoted by $\mathfrak{L}\ltimes_{\theta}\mathfrak{L}^\prime.$
\end{proposition}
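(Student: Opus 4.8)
The plan is to verify the three defining axioms \eqref{392.1}--\eqref{392.3} of a Lie triple system for $[\cdot,\cdot,\cdot]_\theta$ directly, exploiting that the $\mathfrak{L}$-summand is a subsystem (the $\mathfrak{L}$-component of any bracket is simply the corresponding bracket in $\mathfrak{L}$), so that each identity can be checked componentwise in $\mathfrak{L}$ and in $\mathfrak{L}'$. The $\mathfrak{L}$-components will always reduce to the corresponding axiom for $(\mathfrak{L},[\cdot,\cdot,\cdot]_\mathfrak{L})$, and the whole burden of the proof lies in the $\mathfrak{L}'$-components.

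For \eqref{392.1}, writing $a=x+u$ and $b=y+v$, the $\mathfrak{L}$-component is $[x,x,y]_\mathfrak{L}=0$, while the $\mathfrak{L}'$-component is $D(x,x)v+\theta(x,y)u-\theta(x,y)u+\lambda[u,u,v]_{\mathfrak{L}'}$; this vanishes because $D(x,x)=\theta(x,x)-\theta(x,x)=0$, the two $\theta$-terms cancel, and $[u,u,v]_{\mathfrak{L}'}=0$. For the cyclic identity \eqref{392.2}, the $\mathfrak{L}$-components and the $\lambda[\cdot,\cdot,\cdot]_{\mathfrak{L}'}$-contributions vanish by \eqref{392.2} in $\mathfrak{L}$ and in $\mathfrak{L}'$ respectively; the remaining mixed $D$- and $\theta$-terms I would collect and cancel using the defining relation $D(a,b)=\theta(b,a)-\theta(a,b)$, which turns the cyclic sum of the bare $\theta$-terms into exactly the negative of the cyclic sum of the $D$-terms.

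The main work, and the expected obstacle, is the fundamental identity \eqref{392.3}; here I would expand both sides with $a=x_a+u_a$ and so on, and match the $\mathfrak{L}'$-components. These organize into three groups. First, the terms in which every acting and acted-on slot stays in $\mathfrak{L}'$ produce $\lambda^2$-multiples of the expression $[u_a,u_b,[u_c,u_d,u_e]_{\mathfrak{L}'}]_{\mathfrak{L}'}$ together with its three counterparts on the right, which balance by \eqref{392.3} in $\mathfrak{L}'$. Second, the terms that are linear in a single $\mathfrak{L}'$-vector and dressed by compositions of $\theta$ and $D$ (including $\theta$ evaluated on an $\mathfrak{L}$-bracket) reduce, after grouping, precisely to the two module axioms of Definition \ref{def392.2}.

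Third---and this is the crux---a large number of cross terms appear (for instance $\lambda[u_a,u_b,\theta(x_d,x_e)u_c]_{\mathfrak{L}'}$, $D(x_a,x_b)\lambda[u_c,u_d,u_e]_{\mathfrak{L}'}$, or $\theta(x_d,x_e)\lambda[u_a,u_b,u_c]_{\mathfrak{L}'}$) that are matched by nothing in the first two groups; the point is that every one of them vanishes by virtue of the two conditions defining an action. Indeed $\theta(x,y)u\in C(\mathfrak{L}')$ forces any $\mathfrak{L}'$-bracket with such an element in any slot to vanish (using the skew-symmetry and cyclic identities to move the central element into the first slot), while $\theta(x,y)[u,v,w]_{\mathfrak{L}'}=0$---and hence also $D(x,y)[u,v,w]_{\mathfrak{L}'}=0$---annihilates every term in which $\theta$ or $D$ is applied to an $\mathfrak{L}'$-derived element. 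The delicate part is the bookkeeping: one must confirm that after discarding all these vanishing cross terms, the surviving $\mathfrak{L}'$-components on the two sides agree termwise, with the action conditions being exactly what guarantees that no unmatched term is left over.
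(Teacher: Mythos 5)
Your proposal is correct and follows essentially the same route as the paper: a direct verification of axioms (\ref{392.1})--(\ref{392.3}) for $[\cdot,\cdot,\cdot]_\theta$, in which the $\mathfrak{L}$-components cancel by the axioms of $\mathfrak{L}$, the terms linear in a single $\mathfrak{L}'$-vector cancel by the two representation axioms of Definition \ref{def392.2} (and the $D$-version they imply), the $\lambda^2$-terms cancel by the axioms of $\mathfrak{L}'$, and all remaining cross terms vanish by the two action conditions of Definition \ref{def2112.4}. The only difference is presentational: the paper carries out the expansion in one long block, while you organize the same cancellations into explicit groups.
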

\begin{proof}
To prove $(\mathfrak{L}\oplus \mathfrak{L}^\prime,[\cdot,\cdot,\cdot]_{\theta})$ is a Lie triple system, we only need to show Eqs. (\ref{392.1})-(\ref{392.3}) hold. First, we show that $(\mathfrak{L}\oplus \mathfrak{L}^\prime,[\cdot,\cdot,\cdot]_{\theta})$ satisfies Eq. (\ref{392.1}), for all $x,y,z\in \mathfrak{L}$, $u,v,w\in \mathfrak{L}'$,
\begin{align*}
[x+u,x+u,z+w]_\theta=[x,x,z]_\mathfrak{L}+D(x,x)w+\theta(x,z)u-\theta(x,z)u+\lambda[u,u,w]_{\mathfrak{L}^\prime}=0.
\end{align*}

Then, for Eq. (\ref{392.2}), we have
\begin{align*}
&[x+u,y+v,z+w]_\theta+[y+v,z+w, x+u]_\theta+[z+w, x+u,y+v]_\theta\\
=&[x,y,z]_\mathfrak{L}+D(x,y)w+\theta(y,z)u-\theta(x,z)v+\lambda[u,v,w]_{\mathfrak{L}^\prime}\\
&+[y,z,x]_\mathfrak{L}+D(y,z)u+\theta(z,x)v-\theta(y,x)w+\lambda[v,w,u]_{\mathfrak{L}^\prime}\\
&+[z,x,y]_\mathfrak{L}+D(z,x)v+\theta(x,y)w-\theta(z,y)u+\lambda[w,u,v]_{\mathfrak{L}^\prime}\\
=&0.
\end{align*}

At last, we show that Eq. (\ref{392.3}) holds
\begin{align*}
&[a+p,b+q,[x+u,y+v,z+w]_\theta]_\theta-[[a+p,b+q,x+u]_\theta,y+v,z+w]_\theta\\
&-[x+u,[a+p,b+q,y+v]_\theta,z+w]_\theta-[x+u,y+v,[a+p,b+q,z+w]_\theta]_\theta\\
=&[a+p,b+q,[x,y,z]_\mathfrak{L}+D(x,y)w+\theta(y,z)u-\theta(x,z)v+\lambda[u,v,w]_{\mathfrak{L}^\prime}]_\theta\\
&-[[a,b,x]_\mathfrak{L}+D(a,b)u+\theta(b,x)p-\theta(a,x)q+\lambda[p,q,u]_{\mathfrak{L}^\prime},y+v,z+w]_\theta\\
&-[x+u,[a,b,y]_\mathfrak{L}+D(a,b)v+\theta(b,y)p-\theta(a,y)q+\lambda[p,q,v]_{\mathfrak{L}^\prime},z+w]_\theta\\
&-[x+u,y+v,[a,b,z]_\mathfrak{L}+D(a,b)w+\theta(b,z)p-\theta(a,z)q+\lambda[p,q,w]_{\mathfrak{L}^\prime}]_\theta\\
=&[a,b,[x,y,z]_\mathfrak{L}]_\mathfrak{L}+D(a,b)D(x,y)w+D(a,b)\theta(y,z)u-D(a,b)\theta(x,z)v\\
&+\lambda D(a,b)[u,v,w]_{\mathfrak{L}^\prime}+\theta(b,[x,y,z]_\mathfrak{L})p-\theta(a,[x,y,z]_\mathfrak{L})q
+\lambda[p,q,D(x,y)w]_{\mathfrak{L}^\prime}\\
&+\lambda[p,q,\theta(y,z)u]_{\mathfrak{L}^\prime}-\lambda[p,q,\theta(x,z)v]_{\mathfrak{L}^\prime}
+\lambda[p,q,\lambda[u,v,w]_{\mathfrak{L}^\prime}]_{\mathfrak{L}^\prime}\\
&-[[a,b,x]_\mathfrak{L},y,z]_\mathfrak{L}-D([a,b,x]_\mathfrak{L},y)w-\theta(y,z)D(a,b)u-\theta(y,z)\theta(b,x)p\\
&+\theta(y,z)\theta(a,x)q-\lambda\theta(y,z)[p,q,u]_{\mathfrak{L}^\prime}+\theta([a,b,x]_\mathfrak{L},z)v
-\lambda[D(a,b)u, v,w]_{\mathfrak{L}^\prime}\\
&-\lambda[\theta(b,x)p, v,w]_{\mathfrak{L}^\prime}+\lambda[\theta(a,x)q,v,w]_{\mathfrak{L}^\prime}
-\lambda[\lambda[p,q,u]_{\mathfrak{L}^\prime},v,w]_{\mathfrak{L}^\prime}\\
&-[x,[a,b,y]_\mathfrak{L},z]_\mathfrak{L}-D(x,[a,b,y]_\mathfrak{L})w-\theta([a,b,y]_\mathfrak{L},z)u+\theta(x,z)D(a,b)v\\
&+\theta(x,z)\theta(b,y)p-\theta(x,z)\theta(a,y)q+\lambda\theta(x,z)[p,q,v]_{\mathfrak{L}^\prime}
-\lambda[u,\theta(b,y)p,w]_{\mathfrak{L}^\prime}\\
&-\lambda[u,D(a,b)v,w]_{\mathfrak{L}^\prime}+\lambda[u,\theta(a,y)q,w]_{\mathfrak{L}^\prime}
+\lambda[u,\lambda[p,q,v]_{\mathfrak{L}^\prime},w]_{\mathfrak{L}^\prime}\\
&-[x,y,[a,b,z]_\mathfrak{L}]_\mathfrak{L}-D(x,y)\theta(b,z)p-D(x,y)D(a,b)w+D(x,y)\theta(a,z)q\\
&-\theta(y,[a,b,z]_\mathfrak{L})u+\theta(x,[a,b,z]_\mathfrak{L})v-\lambda D(x,y)[p,q,w]_{\mathfrak{L}^\prime}
-\lambda[u,v,\theta(b,z)p]_{\mathfrak{L}^\prime}\\
&-\lambda[u,v,D(a,b)w]_{\mathfrak{L}^\prime}+\lambda[u,v,\theta(a,z)q]_{\mathfrak{L}^\prime}
-\lambda[u,v,\lambda[p,q,w]_{\mathfrak{L}^\prime}]_{\mathfrak{L}^\prime}\\
=&0,
\end{align*}
which implies that $(\mathfrak{L}\oplus \mathfrak{L}^\prime,[\cdot,\cdot,\cdot]_{\theta})$ is a Lie triple system.
\end{proof}

\begin{theorem}
Let $\theta:\mathfrak{L}\times\mathfrak{L}\rightarrow {\rm End}(\mathfrak{L}^\prime)$ be an action of a Lie triple system $(\mathfrak{L},[\cdot,\cdot,\cdot]_{\mathfrak{L}})$ on a Lie triple system $(\mathfrak{L}^\prime,[\cdot,\cdot,\cdot]_{\mathfrak{L}^\prime})$. Then a linear map $T:\mathfrak{L}^\prime \rightarrow \mathfrak{L}$ is a relative Rota-Baxter operator of weight $\lambda$ if and only if the graph
\begin{align*}
Gr(T)=\{Tu+u~|~u\in \mathfrak{L}^\prime\}
\end{align*}
is a subsystem of the Lie triple system $\mathfrak{L}\ltimes_{\theta}\mathfrak{L}^\prime$.
\end{theorem}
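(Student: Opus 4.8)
The plan is to establish the equivalence by a single direct computation: evaluate the triple bracket $[\cdot,\cdot,\cdot]_\theta$ on three generic elements of $Gr(T)$ and compare the two components against the defining identity (\ref{2112.7}). Since $Gr(T)=\{Tu+u\mid u\in\mathfrak{L}'\}$ is a subspace of $\mathfrak{L}\oplus\mathfrak{L}'$, and a subsystem is by definition a subspace closed under the trilinear operation, the whole statement reduces to deciding when $Gr(T)$ is closed under $[\cdot,\cdot,\cdot]_\theta$.

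First I would take arbitrary $Tu+u,\,Tv+v,\,Tw+w\in Gr(T)$ and apply the formula for $[\cdot,\cdot,\cdot]_\theta$ with $x=Tu$, $y=Tv$, $z=Tw$ (and $\mathfrak{L}'$-parts $u,v,w$). This produces an element of $\mathfrak{L}\oplus\mathfrak{L}'$ whose $\mathfrak{L}$-component is $[Tu,Tv,Tw]_{\mathfrak{L}}$ and whose $\mathfrak{L}'$-component is
\[
D(Tu,Tv)w+\theta(Tv,Tw)u-\theta(Tu,Tw)v+\lambda[u,v,w]_{\mathfrak{L}'}.
\]
The next step is the key observation that an element $x+\xi$ with $x\in\mathfrak{L}$ and $\xi\in\mathfrak{L}'$ belongs to $Gr(T)$ if and only if $x=T\xi$; this is immediate from the description of the graph. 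Applying this criterion to the bracket just computed, $Gr(T)$ is closed under $[\cdot,\cdot,\cdot]_\theta$ exactly when the $\mathfrak{L}$-component equals $T$ applied to the $\mathfrak{L}'$-component, namely
\[
[Tu,Tv,Tw]_{\mathfrak{L}}=T\Big(D(Tu,Tv)w-\theta(Tu,Tw)v+\theta(Tv,Tw)u+\lambda[u,v,w]_{\mathfrak{L}'}\Big),
\]
which is precisely Eq. (\ref{2112.7}).

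Since this equation is quantified over all $u,v,w\in\mathfrak{L}'$ in both the subsystem condition and the relative Rota-Baxter condition, the forward and backward implications are the same identity read in opposite directions, so both are handled at once. I do not expect any real obstacle here; the only point requiring care is the bookkeeping of the $\theta$-terms and their signs, since in the semidirect-product bracket the acting pair of arguments is $(y,z)$ on the first $\mathfrak{L}'$-entry and $(x,z)$ on the second, and one must confirm these align with $\theta(Tv,Tw)u$ and $-\theta(Tu,Tw)v$ in (\ref{2112.7}). Once that matching is verified the proof is complete.
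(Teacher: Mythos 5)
Your proposal is correct and follows essentially the same route as the paper's own proof: compute $[Tu+u,Tv+v,Tw+w]_\theta$ in $\mathfrak{L}\ltimes_\theta\mathfrak{L}'$ and observe that closure of $Gr(T)$ amounts to the $\mathfrak{L}$-component $[Tu,Tv,Tw]_\mathfrak{L}$ being $T$ applied to the $\mathfrak{L}'$-component, which is exactly Eq.~(\ref{2112.7}). Your explicit remark that $x+\xi\in Gr(T)$ if and only if $x=T\xi$ is left implicit in the paper but is the same argument.
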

\begin{proof}
Let $T:\mathfrak{L}^\prime \rightarrow \mathfrak{L}$ be a linear map. For all $u,v,w\in \mathfrak{L}^\prime$, we have
\begin{align*}
&[Tu+u,Tv+v,Tw+w]_\theta\\
=&[Tu,Tv,Tw]_\mathfrak{L}+D(Tu,Tv)w+\theta(Tv,Tw)u-\theta(Tu,Tw)v+\lambda[u,v,w]_{\mathfrak{L}^\prime},
\end{align*}
which implies that the graph $Gr(T)=\{Tu+u~|~u\in \mathfrak{L}^\prime\}$ is a subsystem of the Lie triple system $\mathfrak{L}\ltimes_{\theta}\mathfrak{L}^\prime$ if and only if $T$ satisfies
\begin{align*}
[Tu,Tv,Tw]_\mathfrak{L}=T\Big(D(Tu,Tv)w+\theta(Tv,Tw)u-\theta(Tu,Tw)v+\lambda[u,v,w]_{\mathfrak{L}^\prime}\Big),
\end{align*}
which means that $T$ is a relative Rota-Baxter operator of weight $\lambda$.
\end{proof}

Since the graph $Gr(T)$ is isomorphic to $\mathfrak{L}^\prime$ as vector spaces, then there is an induced Lie triple system structure on $\mathfrak{L}^\prime$.

\begin{corollary}
Let $T:\mathfrak{L}^\prime \rightarrow \mathfrak{L}$ be a relative Rota-Baxter operator of weight $\lambda$ from a Lie triple system $(\mathfrak{L}^\prime,[\cdot,\cdot,\cdot]_{\mathfrak{L}^\prime})$ to a Lie triple system $(\mathfrak{L},[\cdot,\cdot,\cdot]_{\mathfrak{L}})$ with respect to an action $\theta$. Then $(\mathfrak{L}^\prime,[\cdot,\cdot,\cdot]_T)$ is a Lie triple system, called the descendent Lie triple system of $T$, where
\begin{align*}
[u,v,w]_T=D(Tu,Tv)w+\theta(Tv,Tw)u-\theta(Tu,Tw)v+\lambda[u,v,w]_{\mathfrak{L}^\prime},~~\forall~u,v,w\in \mathfrak{L}^\prime.
\end{align*}
Moreover, $T$ is a homomorphism from $(\mathfrak{L}^\prime,[\cdot,\cdot,\cdot]_T)$ to $(\mathfrak{L},[\cdot,\cdot,\cdot]_{\mathfrak{L}})$.
\end{corollary}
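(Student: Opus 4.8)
The plan is to avoid checking the three Lie-triple-system axioms for $[\cdot,\cdot,\cdot]_T$ head-on, and instead transport the structure from the graph $Gr(T)$, which the preceding theorem already certifies as a genuine subsystem of $\mathfrak{L}\ltimes_{\theta}\mathfrak{L}'$. Concretely, I would introduce the linear map $\iota:\mathfrak{L}'\to Gr(T)$ defined by $\iota(u)=Tu+u$. This is a linear isomorphism onto $Gr(T)$: it is surjective by the very definition of $Gr(T)$, and injective because the $\mathfrak{L}'$-component of $Tu+u$ is $u$, so $\iota(u)=0$ forces $u=0$.

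The key step is to recognize that the proposed operation $[\cdot,\cdot,\cdot]_T$ is exactly the pullback along $\iota$ of the bracket $[\cdot,\cdot,\cdot]_{\theta}$ restricted to $Gr(T)$. Using the expansion of $[Tu+u,Tv+v,Tw+w]_{\theta}$ already computed in the proof of the theorem, together with the defining identity (\ref{2112.7}) of a relative Rota-Baxter operator which replaces $[Tu,Tv,Tw]_{\mathfrak{L}}$ by $T([u,v,w]_T)$, one obtains
\[
[Tu+u,Tv+v,Tw+w]_{\theta}=T([u,v,w]_T)+[u,v,w]_T=\iota([u,v,w]_T).
\]
Hence $\iota$ intertwines $[\cdot,\cdot,\cdot]_T$ with the subsystem bracket inherited by $Gr(T)$. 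Since $Gr(T)$ is a Lie triple system and $\iota$ is a vector-space isomorphism carrying $[\cdot,\cdot,\cdot]_T$ onto that bracket, the axioms (\ref{392.1})--(\ref{392.3}) hold automatically for $[\cdot,\cdot,\cdot]_T$ by transport of structure; that is, $(\mathfrak{L}',[\cdot,\cdot,\cdot]_T)$ is a Lie triple system.

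For the final assertion, I would read the homomorphism property straight off the definition of $T$. Comparing $[u,v,w]_T=D(Tu,Tv)w+\theta(Tv,Tw)u-\theta(Tu,Tw)v+\lambda[u,v,w]_{\mathfrak{L}'}$ with the argument of $T$ on the right-hand side of (\ref{2112.7}) shows the two expressions agree term by term, so $T([u,v,w]_T)=[Tu,Tv,Tw]_{\mathfrak{L}}$ for all $u,v,w\in\mathfrak{L}'$, which is precisely the statement that $T$ is a homomorphism from $(\mathfrak{L}',[\cdot,\cdot,\cdot]_T)$ to $(\mathfrak{L},[\cdot,\cdot,\cdot]_{\mathfrak{L}})$.

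I do not anticipate a genuine obstacle here, since all the real content has been absorbed into the theorem. The only point requiring care is the bookkeeping of signs and the ordering of the two middle terms (namely $-\theta(Tu,Tw)v$ versus $+\theta(Tv,Tw)u$) when matching $[\cdot,\cdot,\cdot]_T$ against both (\ref{2112.7}) and the computed expansion of $[\cdot,\cdot,\cdot]_{\theta}$; once these line up, both claims are immediate. Had I instead attempted the brute-force route of verifying (\ref{392.3}) for $[\cdot,\cdot,\cdot]_T$ directly, the fundamental identity would be the genuinely laborious computation, which is exactly what the transport-of-structure argument allows me to bypass.
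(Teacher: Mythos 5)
Your proposal is correct and is exactly the paper's intended argument: the paper derives this corollary immediately from the graph theorem via the remark that $Gr(T)\cong\mathfrak{L}'$ as vector spaces, so the subsystem structure on $Gr(T)$ transports to $[\cdot,\cdot,\cdot]_T$ on $\mathfrak{L}'$, and the homomorphism property is a restatement of the defining identity (\ref{2112.7}). You have merely spelled out the details (the isomorphism $\iota(u)=Tu+u$ and the identity $[Tu+u,Tv+v,Tw+w]_\theta=\iota([u,v,w]_T)$) that the paper leaves implicit.
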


In the sequel, we give the relationship between relative Rota-Baxter operators of weight $\lambda$ and Nijenhuis operators. Recall from {\rm\cite{CHMM}} that a Nijenhuis operator on a Lie triple system $(\mathfrak{L},[\cdot,\cdot,\cdot]_{\mathfrak{L}})$ is a linear map $N:\mathfrak{L} \rightarrow \mathfrak{L}$ satisfying
\begin{align*}
[Nx,Ny,Nz]_\mathfrak{L}=&N[Nx,Ny,z]_\mathfrak{L}+N[x,Ny,Nz]_\mathfrak{L}+N[Nx,y,Nz]_\mathfrak{L}-N^2[Nx,y,z]_\mathfrak{L}\notag\\
&-N^2[x,Ny,z]_\mathfrak{L}-N^2[x,y,Nz]_\mathfrak{L}+N^3[x,y,z]_\mathfrak{L},
\end{align*}
for all $x,y,z\in \mathfrak{L}$.

\begin{proposition}
Let $\theta:\mathfrak{L}\times\mathfrak{L}\rightarrow {\rm End}(\mathfrak{L}^\prime)$ be an action of a Lie triple system $(\mathfrak{L},[\cdot,\cdot,\cdot]_{\mathfrak{L}})$ on a Lie triple system $(\mathfrak{L}^\prime,[\cdot,\cdot,\cdot]_{\mathfrak{L}^\prime})$. Then a linear map $T:\mathfrak{L}^\prime \rightarrow \mathfrak{L}$ is a relative Rota-Baxter operator of weight $\lambda$ if and only if
\begin{align*}
\widetilde{T}=
\left(
\begin{matrix}
{\rm id}&T\\
0&0
\end{matrix}
\right):\mathfrak{L}\oplus \mathfrak{L}^\prime\rightarrow \mathfrak{L}\oplus \mathfrak{L}^\prime
\end{align*}
is a Nijenhuis operator acting on the Lie triple system $\mathfrak{L}\ltimes_{\theta}\mathfrak{L}^\prime$.
\end{proposition}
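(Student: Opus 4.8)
The plan is to exploit the fact that $\widetilde{T}$ is idempotent. A direct block computation gives
\[
\widetilde{T}^{2}=\begin{pmatrix}{\rm id}&T\\0&0\end{pmatrix}\begin{pmatrix}{\rm id}&T\\0&0\end{pmatrix}=\begin{pmatrix}{\rm id}&T\\0&0\end{pmatrix}=\widetilde{T},
\]
so $\widetilde{T}^{\,n}=\widetilde{T}$ for every $n\ge 1$, and on elements $\widetilde{T}(x+u)=x+Tu\in\mathfrak{L}$. Substituting $\widetilde{T}^{\,2}=\widetilde{T}^{\,3}=\widetilde{T}$ into the defining Nijenhuis identity collapses it to the single equation
\begin{align*}
[\widetilde{T}X,\widetilde{T}Y,\widetilde{T}Z]_\theta
=\;&\widetilde{T}[\widetilde{T}X,\widetilde{T}Y,Z]_\theta+\widetilde{T}[X,\widetilde{T}Y,\widetilde{T}Z]_\theta+\widetilde{T}[\widetilde{T}X,Y,\widetilde{T}Z]_\theta\\
&-\widetilde{T}[\widetilde{T}X,Y,Z]_\theta-\widetilde{T}[X,\widetilde{T}Y,Z]_\theta-\widetilde{T}[X,Y,\widetilde{T}Z]_\theta+\widetilde{T}[X,Y,Z]_\theta,
\end{align*}
where $X=x+u$, $Y=y+v$, $Z=z+w$. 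So it suffices to verify this one identity, read in both directions, for arbitrary $X,Y,Z$.

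Next I would evaluate every term explicitly. Since $\widetilde{T}$ annihilates the $\mathfrak{L}^\prime$-component and pushes the $\mathfrak{L}^\prime$-component through $T$ into $\mathfrak{L}$, each summand $\widetilde{T}[\,\cdots]_\theta$ equals (the $\mathfrak{L}$-part of the bracket) $+\,T$(the $\mathfrak{L}^\prime$-part of the bracket), while the left-hand side $[\widetilde{T}X,\widetilde{T}Y,\widetilde{T}Z]_\theta=[x+Tu,y+Tv,z+Tw]_\mathfrak{L}$ is purely in $\mathfrak{L}$. Thus the whole identity lives in $\mathfrak{L}$ and splits cleanly into an ``$\mathfrak{L}$-part'' and a ``$T(\mathfrak{L}^\prime$-part)''.

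For the $\mathfrak{L}$-part I would write each argument as the sum of its $x$-piece and its $Tu$-piece and expand by trilinearity of $[\cdot,\cdot,\cdot]_\mathfrak{L}$; an inclusion--exclusion over the three slots shows that the seven $\mathfrak{L}$-part terms on the right telescope to $[x+Tu,y+Tv,z+Tw]_\mathfrak{L}-[Tu,Tv,Tw]_\mathfrak{L}$, so after cancelling the left-hand side the only surviving $\mathfrak{L}$-contribution is $[Tu,Tv,Tw]_\mathfrak{L}$. For the $T(\mathfrak{L}^\prime$-part) I would group the terms by which module element is acted on---$w$ through $D$, and $u,v$ through $\theta$---and the same cancellation pattern collapses each group to its top term, yielding $D(Tu,Tv)w+\theta(Tv,Tw)u-\theta(Tu,Tw)v$, together with the weight term $\lambda[u,v,w]_{\mathfrak{L}^\prime}$ from the last summand. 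Hence the reduced Nijenhuis identity is exactly
\[
[Tu,Tv,Tw]_{\mathfrak{L}}=T\Big(D(Tu,Tv)w-\theta(Tu,Tw)v+\theta(Tv,Tw)u+\lambda[u,v,w]_{\mathfrak{L}^\prime}\Big),
\]
which is precisely Eq. (\ref{2112.7}), and since each step is an equivalence this gives both implications.

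The computation uses nothing beyond multilinearity of $D$, $\theta$, and the brackets and the explicit formula for $[\cdot,\cdot,\cdot]_\theta$; the action axioms are already absorbed into the fact, proved above, that $\mathfrak{L}\ltimes_\theta\mathfrak{L}^\prime$ is a Lie triple system. The only real work---and the step most prone to sign and indexing errors---is the bookkeeping of the expanded terms and checking that the two inclusion--exclusion cancellations come out exactly right. The idempotency of $\widetilde{T}$ is what makes this finite and clean, since it removes the genuinely distinct $N^2$- and $N^3$-contributions that would otherwise clutter the Nijenhuis identity.
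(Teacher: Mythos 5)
Your proposal is correct and follows essentially the same route as the paper: both exploit the idempotency $\widetilde{T}^2=\widetilde{T}$ to collapse the Nijenhuis identity, expand both sides using the explicit semidirect product bracket $[\cdot,\cdot,\cdot]_\theta$, and observe that everything cancels except $[Tu,Tv,Tw]_{\mathfrak{L}}$ versus $T\big(D(Tu,Tv)w+\theta(Tv,Tw)u-\theta(Tu,Tw)v+\lambda[u,v,w]_{\mathfrak{L}^\prime}\big)$, giving the equivalence with Eq.~(\ref{2112.7}). Your telescoping/inclusion--exclusion bookkeeping is just a more explicitly organized version of the paper's displayed expansion, and it checks out.
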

\begin{proof}
For all $x,y,z\in \mathfrak{L}$, $u,v,w\in \mathfrak{L}^\prime$, on the one hand, we have
\begin{align*}
&[\widetilde{T}(x+u),\widetilde{T}(y+v),\widetilde{T}(z+w)]_\theta\\
=&[x+Tu,y+Tv,z+Tw]_\theta\\
=&[x,y,z]_\mathfrak{L}+[Tu,y,z]_\mathfrak{L}+[x,Tv,z]_\mathfrak{L}+[x,y,Tw]_\mathfrak{L}+[x,Tv,Tw]_\mathfrak{L}
+[Tu,y,Tw]_\mathfrak{L}\\
&+[Tu,Tv,z]_\mathfrak{L}+[Tu,Tv,Tw]_\mathfrak{L}.
\end{align*}

On the other hand, since $\widetilde{T}^2=\widetilde{T}$, we have
\begin{align*}
&\widetilde{T}\Big([\widetilde{T}(x+u),\widetilde{T}(y+v),z+w]_\theta+[\widetilde{T}(x+u),y+v,\widetilde{T}(z+w)]_\theta
+[x+u,\widetilde{T}(y+v),\widetilde{T}(z+w)]_\theta\Big)\\
&-\widetilde{T}^2\Big([\widetilde{T}(x+u),y+v,z+w]_\theta+[x+u,y+v,\widetilde{T}(z+w)]_\theta
+[x+u,\widetilde{T}(y+v),z+w]_\theta\Big)\\
&+\widetilde{T}^3\Big([x+u,y+v,z+w]_\theta\Big)\\
=&[x,y,z]_\mathfrak{L}+[Tu,y,z]_\mathfrak{L}+[x,Tv,z]_\mathfrak{L}+[x,y,Tw]_\mathfrak{L}+[x,Tv,Tw]_\mathfrak{L}
+[Tu,y,Tw]_\mathfrak{L}\\
&+[Tu,Tv,z]_\mathfrak{L}+T\Big(D(Tu,Tv)w+\theta(Tv,Tw)u-\theta(Tu,Tw)v+\lambda[u,v,w]_{\mathfrak{L}^\prime}\Big),
\end{align*}
which implies that $\widetilde{T}$ is a Nijenhuis operator on the Lie triple system $\mathfrak{L}\ltimes_{\theta}\mathfrak{L}^\prime$ if and only if Eq. (\ref{2112.7}) holds.
\end{proof}

We will end this section with some examples of relative Rota-Baxter operators on Lie triple systems.

\begin{proposition}\label{prop2112.13}
Let $(\mathfrak{L},[\cdot,\cdot,\cdot]_{\mathfrak{L}})$ be a Lie triple system such that the adjoint representation $\theta: \mathfrak{L}\times\mathfrak{L} \rightarrow {\rm End}(\mathfrak{L})$ is an action of the Lie triple system $(\mathfrak{L},[\cdot,\cdot,\cdot]_{\mathfrak{L}})$ on itself. Let $\mathfrak{L}^\prime$ be an abelian subsystem of Lie triple system $\mathfrak{L}$ and satisfy $\mathfrak{L}^1\cap \mathfrak{L}^\prime=0$. Let $\mathfrak{h}$ be a compliment of $\mathfrak{L}^\prime$ such that $\mathfrak{L}=\mathfrak{h}\oplus\mathfrak{L}^\prime$ as vector spaces. Then the projection $P:\mathfrak{L}\rightarrow \mathfrak{L}$ onto the subspace $\mathfrak{L}^\prime$, i.e., $P(k+u)=u$, for all $k\in \mathfrak{h}$, $u\in \mathfrak{L}^\prime$, is a relative Rota-Baxter operator of weight $\lambda$ from $\mathfrak{L}$ to $\mathfrak{L}$ with respect to the adjoint action $\theta$.
\end{proposition}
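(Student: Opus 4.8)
The plan is to verify the defining identity \eqref{2112.7} directly for $T=P$, where both the source and target Lie triple system are $\mathfrak{L}$ itself and $\theta$ is the adjoint action, so that $\theta(a,b)c=[c,a,b]_{\mathfrak{L}}$ and $D(a,b)c=[a,b,c]_{\mathfrak{L}}$. First I would record the standing hypothesis in a usable form: the adjoint representation is an action exactly when $\mathfrak{L}^1\subseteq C(\mathfrak{L})$ (this is the content of the Example preceding the statement), so every triple bracket of elements of $\mathfrak{L}$ is central. Substituting the adjoint formulas, the identity to be checked reads
\begin{align*}
[Pu,Pv,Pw]_{\mathfrak{L}}=P\Big([Pu,Pv,w]_{\mathfrak{L}}-[v,Pu,Pw]_{\mathfrak{L}}+[u,Pv,Pw]_{\mathfrak{L}}+\lambda[u,v,w]_{\mathfrak{L}}\Big)
\end{align*}
for all $u,v,w\in\mathfrak{L}$.

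For the left-hand side, since $P$ projects onto $\mathfrak{L}'$ we have $Pu,Pv,Pw\in\mathfrak{L}'$; because $\mathfrak{L}'$ is a subsystem, $[Pu,Pv,Pw]_{\mathfrak{L}}\in\mathfrak{L}'$, while it also lies in $\mathfrak{L}^1$, whence $[Pu,Pv,Pw]_{\mathfrak{L}}\in\mathfrak{L}'\cap\mathfrak{L}^1=0$. Thus the left-hand side vanishes, and in fact this observation already subsumes the stated abelianness of $\mathfrak{L}'$. So the entire identity reduces to showing that the right-hand side is zero.

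For the right-hand side, each of the four terms inside $P$ is a triple bracket of elements of $\mathfrak{L}$, hence lies in $\mathfrak{L}^1$, so their sum is an element of $\mathfrak{L}^1$. The crux of the argument — and the step I expect to be the main obstacle — is to show that $P$ annihilates this correction term, i.e. that $\mathfrak{L}^1\subseteq\ker P=\mathfrak{h}$. This is precisely where the hypothesis $\mathfrak{L}^1\cap\mathfrak{L}'=0$ must be exploited: it guarantees that $\mathfrak{L}^1$ and $\mathfrak{L}'$ are in direct sum, so that the complement $\mathfrak{h}$ can be taken to contain $\mathfrak{L}^1$, giving $P|_{\mathfrak{L}^1}=0$ and collapsing the right-hand side to $0$. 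I would be careful to flag that $\mathfrak{L}^1\cap\mathfrak{L}'=0$ is not by itself sufficient for a completely \emph{arbitrary} complement: testing the identity with $u\in\mathfrak{h}$ and $v,w\in\mathfrak{L}'$ leaves the single term $(1+\lambda)[u,v,w]_{\mathfrak{L}}$, and $P$ kills it for every $\lambda$ only when $[u,v,w]_{\mathfrak{L}}\in\mathfrak{h}$. Hence the real content to secure is the containment $\mathfrak{L}^1\subseteq\mathfrak{h}$; once that is in place, both sides of \eqref{2112.7} equal $0$ and $P$ is a relative Rota-Baxter operator of weight $\lambda$.
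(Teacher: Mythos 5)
Your computation is the same one the paper performs: substitute $T=P$ and the adjoint formulas into \eqref{2112.7}, observe that the left-hand side $[Px,Py,Pz]_{\mathfrak{L}}$ lies in $\mathfrak{L}'\cap\mathfrak{L}^1=0$, and argue that $P$ annihilates the four bracket terms on the right because they all lie in $\mathfrak{L}^1$. The difference is that the paper's proof stops at the bare assertion that the whole expression vanishes ``since $\mathfrak{L}'$ is abelian and $\mathfrak{L}^1\cap\mathfrak{L}'=0$'', which silently uses $P(\mathfrak{L}^1)=0$, i.e. $\mathfrak{L}^1\subseteq\mathfrak{h}$. The obstacle you flag is real, and it is a gap in the paper's statement and proof rather than in yours: $\mathfrak{L}^1\cap\mathfrak{L}'=0$ only guarantees that \emph{some} complement containing $\mathfrak{L}^1$ exists, not that every complement works. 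Concretely, in the paper's own $3$-dimensional example ($[e_1,e_2,e_1]_{\mathfrak{L}}=e_3$, $\mathfrak{L}'=\langle e_1\rangle$), the complement $\mathfrak{h}=\langle e_2,\,e_1+e_3\rangle$ satisfies every stated hypothesis but gives $P(e_3)=-e_1$; taking $u=e_2$, $v=w=e_1$ in \eqref{2112.7}, the left-hand side is $0$ while the right-hand side is $P\bigl(-(1+\lambda)e_3\bigr)=(1+\lambda)e_1$, so $P$ fails to be a relative Rota-Baxter operator for every $\lambda\neq-1$. Thus the proposition is correct only after fixing the complement so that $\mathfrak{L}^1\subseteq\mathfrak{h}=\ker P$ (which $\mathfrak{L}^1\cap\mathfrak{L}'=0$ makes possible); your proof makes that choice explicit and is complete where the paper's is not. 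Your side remark is also correct: once $\mathfrak{L}^1\cap\mathfrak{L}'=0$ is assumed, the abelianness of $\mathfrak{L}'$ is redundant, since the subsystem property already forces $[Px,Py,Pz]_{\mathfrak{L}}\in\mathfrak{L}'\cap\mathfrak{L}^1=0$.
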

\begin{proof}
For all $x,y,z\in \mathfrak{L}$, denote by $x^\prime,y^\prime,z^\prime$ their images under the projection $P$. Since $\mathfrak{L}^\prime$ is abelian and $\mathfrak{L}^1\cap \mathfrak{L}^\prime=0$, we have
\begin{align*}
&[P(x),P(y),P(z)]_\mathfrak{L}-P\Big([P(x),P(y),z]_\mathfrak{L}+[P(x),y,P(z)]_\mathfrak{L}+[x,P(y),P(z)]_\mathfrak{L}
+\lambda[x,y,z]_{\mathfrak{L}}\Big)\\
=&[x^\prime,y^\prime,z^\prime]_\mathfrak{L}-P\Big([x^\prime,y^\prime,z]_\mathfrak{L}+[x^\prime,y,z^\prime]_\mathfrak{L}
+[x,y^\prime,z^\prime]_\mathfrak{L}+\lambda[x,y,z]_\mathfrak{L}\Big)\\
=&0,
\end{align*}
which implies that the projection $P$ is a relative Rota-Baxter operator of weight $\lambda$ from $(\mathfrak{L},[\cdot,\cdot,\cdot]_{\mathfrak{L}})$ to $(\mathfrak{L},[\cdot,\cdot,\cdot]_{\mathfrak{L}})$ with respect to the adjoint action $\theta$.
\end{proof}

\begin{example}
Let $(\mathfrak{L},[\cdot,\cdot,\cdot]_\mathfrak{L})$ be a 3-dimensional Lie triple system with a basis $\{e_1.e_2,e_3\}$ and the nonzero multiplication is defined by
\begin{align*}
[e_1,e_2,e_1]_\mathfrak{L}=e_3.
\end{align*}
The center of $(\mathfrak{L},[\cdot,\cdot,\cdot]_\mathfrak{L})$ is the subspace generated by $\{e_3\}$. It is obvious that the adjoint representation $\theta:\mathfrak{L}\times\mathfrak{L}\rightarrow {\rm End}(\mathfrak{L})$ is an action of $(\mathfrak{L},[\cdot,\cdot,\cdot]_\mathfrak{L})$ on itself. Let $\mathfrak{L}'$ be an abelian subsystem of $(\mathfrak{L},[\cdot,\cdot,\cdot]_\mathfrak{L})$ generated by $\{e_1\}$. By Proposition \ref{prop2112.13}, the projection $P:\mathfrak{L}\rightarrow \mathfrak{L}$ given by
\begin{equation*}
\left\{
\begin{array}{lr}P(e_1)=e_1,&\\
P(e_2)=0,&\\
P(e_3)=0,&
\end{array}
\right.
\end{equation*}
is a relative Rota-Baxter operator of weight $\lambda$ from $(\mathfrak{L},[\cdot,\cdot,\cdot]_\mathfrak{L})$ to $(\mathfrak{L},[\cdot,\cdot,\cdot]_\mathfrak{L})$ with respect to the adjoint action $\theta$.
\end{example}

\begin{example}
Let $(\mathfrak{L},[\cdot,\cdot,\cdot]_\mathfrak{L})$ be a 4-dimensional Lie triple system with a basis $\{e_1.e_2,e_3,e_4\}$ and the nonzero multiplication is defined by
\begin{align*}
[e_1,e_2,e_1]_\mathfrak{L}=e_4.
\end{align*}
The center of $(\mathfrak{L},[\cdot,\cdot,\cdot]_\mathfrak{L})$ is the subspace generated by $\{e_3,e_4\}$. It is obvious that the adjoint representation $\theta:\mathfrak{L}\times\mathfrak{L}\rightarrow {\rm End}(\mathfrak{L})$ is an action of $(\mathfrak{L},[\cdot,\cdot,\cdot]_\mathfrak{L})$ on itself. Let $\mathfrak{L}'$ be an abelian subsystem of $(\mathfrak{L},[\cdot,\cdot,\cdot]_\mathfrak{L})$ generated by $\{e_2,e_3\}$. By Proposition \ref{prop2112.13}, the projection $P:\mathfrak{L}\rightarrow \mathfrak{L}$ given by
\begin{equation*}
\left\{
\begin{array}{lr}P(e_1)=0,&\\
P(e_2)=e_2,&\\
P(e_3)=e_3,&\\
P(e_4)=0,&
\end{array}
\right.
\end{equation*}
is a relative Rota-Baxter operator of weight $\lambda$ from $(\mathfrak{L},[\cdot,\cdot,\cdot]_\mathfrak{L})$ to $(\mathfrak{L},[\cdot,\cdot,\cdot]_\mathfrak{L})$ with respect to the adjoint action $\theta$.
\end{example}

\section{Cohomologies of relative Rota-Baxter operators and infinitesimal deformations}

In this section, we define a cohomology of a relative Rota-Baxter operator $T$ of weight $\lambda$.  This cohomology will be
used to study infinitesimal deformations of $T$.

Let $(V,\theta)$ be a representation of a Lie triple system $(\mathfrak{L},[\cdot,\cdot,\cdot]_{\mathfrak{L}})$. Denote by
\begin{align*}
C^{2n+1}(\mathfrak{L},V):={\rm Hom}(\underbrace{\mathfrak{L}\times...\times\mathfrak{L}}_{(2n+1)},V),~~(n\geq0),
\end{align*}
which is the space of $(2n+1)$-cochains. For any $f\in C^{2n+1}(\mathfrak{L},V)$ satisfies
\begin{align*}
f(x_1,x_2,...,x_{2n-2},x,x,y)=0,
\end{align*}
and
\begin{align*}
f(x_1,x_2,...,x_{2n-2},x,y,z)+f(x_1,x_2,...,x_{2n-2},y,z,x)+f(x_1,x_2,...,x_{2n-2},z,x,y)=0,
\end{align*}
for all $x,y,z,x_i\in \mathfrak{L},$ $i=1,2,...,2n-2.$

The coboundary operator $\delta:C^{2n-1}(\mathfrak{L},V)\rightarrow C^{2n+1}(\mathfrak{L},V)$ is given by
\begin{gather*}
\begin{aligned}
&(\delta f)(x_1,x_2,...,x_{2n+1})\\
=&~\theta(x_{2n},x_{2n+1})f(x_1,x_2,...,x_{2n-1})
-\theta(x_{2n-1},x_{2n+1})f(x_1,x_2,...,x_{2n})\\
&+\sum\limits_{i=1}^{n}(-1)^{i+1}D(x_{2i-1},x_{2i})f(x_1...,x_{2i-2},x_{2i+1},...,x_{2n+1})\\
&+\sum\limits_{i=1}^{n}\sum\limits_{j=2i+1}^{2n+1}
(-1)^{i+n+1}f(x_1...,x_{2i-2},x_{2i+1},...,[x_{2i-1},x_{2i},x_j],...,x_{2n+1}),\label{322.17}
\end{aligned}
\end{gather*}
for any $x_1,x_2,...,x_{2n+1}\in \mathfrak{L},$ $f\in C^{2n-1}(\mathfrak{L},V).$

It was proved in {\rm\cite{Yam}} that $\delta\circ\delta=0$. Thus, $(\underset{n=0}{\overset{+\infty}{\oplus}} C^{2n+1}(\mathfrak{L},V),\delta)$ is a cochain complex.

\begin{definition}{\rm\cite{Yam}}
The cohomology of the Lie triple system $\mathfrak{L}$ with coefficients in $V$ is the cohomology of the cochain complex $(\underset{n=0}{\overset{+\infty}{\oplus}} C^{2n+1}(\mathfrak{L},V),\delta)$. Denote by $Z^{2n+1}(\mathfrak{L},V)$ and $B^{2n+1}(\mathfrak{L},V)$ the set of $(2n+1)$-cocycles and the set of $(2n+1)$-coboundaries, respectively. The $(2n+1)$-th cohomology group is defined by
\begin{align*}
H^{2n+1}(\mathfrak{L},V)=Z^{2n+1}(\mathfrak{L},V)/B^{2n+1}(\mathfrak{L},V).
\end{align*}
\end{definition}

\subsection{Cohomologies of relative Rota-Baxter operators of weight $\lambda$ on Lie triple systems}

In this subsection, we establish a representation of the Lie triple system $(\mathfrak{L}',[\cdot,\cdot,\cdot]_T)$ on the vector space $\mathfrak{L}$ from a relative Rota-Baxter operator $T: \mathfrak{L}' \rightarrow \mathfrak{L}$ of weight $\lambda$ and define the cohomologies of relative Rota-Baxter operators of weight $\lambda$ on Lie triple systems.

\begin{lemma}\label{lem2113.2}
Let $T: \mathfrak{L}' \rightarrow \mathfrak{L}$ be a relative Rota-Baxter operator of weight $\lambda$ from a Lie triple system $(\mathfrak{L}',[\cdot,\cdot,\cdot]_{\mathfrak{L}'})$ to a Lie triple system $(\mathfrak{L},[\cdot,\cdot,\cdot]_{\mathfrak{L}})$ with respect to an action $\theta$. Define $\theta_T:\mathfrak{L}'\times\mathfrak{L}' \rightarrow {\rm End}(\mathfrak{L})$ by
\begin{align*}
\theta_T(u,v)x=[x,Tu,Tv]_{\mathfrak{L}}-T\Big(D(x,Tu)v-\theta(x,Tv)u\Big),
\end{align*}
also
\begin{align*}
D_T(u,v)x=\theta_T(v,u)x-\theta_T(u,v)x=[Tu,Tv,x]_{\mathfrak{L}}-T\Big(\theta(Tv,x)u-\theta(Tu,x)v\Big),
\end{align*}
for all $x\in \mathfrak{L}$, $u,v\in \mathfrak{L}'$. Then $(\mathfrak{L},\theta_T)$ is a representation of the descendent Lie triple system $(\mathfrak{L}',[\cdot,\cdot,\cdot]_T)$.
\end{lemma}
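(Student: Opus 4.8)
The goal is to verify the two module identities of Definition \ref{def392.2} for the triple $(\theta_T,D_T,[\cdot,\cdot,\cdot]_T)$, i.e.\ that $\theta_T$ is a representation of the descendent system $(\mathfrak{L}',[\cdot,\cdot,\cdot]_T)$ on $\mathfrak{L}$. Rather than expand these identities term by term, the plan is to obtain them structurally from the semidirect product $\mathfrak{L}\ltimes_\theta\mathfrak{L}'$, using that the graph $Gr(T)$ is already known to be a subsystem. In this way the full analytic content of the relative Rota-Baxter identity Eq.\ (\ref{2112.7}) gets absorbed once and for all into the subsystem property, and the representation axioms follow almost formally.

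First I would record the set-up. The space $G:=\mathfrak{L}\ltimes_\theta\mathfrak{L}'$ is a Lie triple system, and by the characterization theorem above $Gr(T)=\{Tu+u\mid u\in\mathfrak{L}'\}$ is a subsystem; moreover $u\mapsto Tu+u$ is an isomorphism $(\mathfrak{L}',[\cdot,\cdot,\cdot]_T)\xrightarrow{\ \sim\ }Gr(T)$, since the $\mathfrak{L}$-component of $[Tu+u,Tv+v,Tw+w]_\theta$ is $[Tu,Tv,Tw]_{\mathfrak{L}}=T[u,v,w]_T$ and its $\mathfrak{L}'$-component is $[u,v,w]_T$ (the descendent bracket from the corollary). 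Next, $\mathfrak{L}_0:=\mathfrak{L}\oplus 0$ is a vector-space complement of $Gr(T)$, because $a+b=(Tb+b)+\bigl((a-Tb)+0\bigr)$ decomposes any element uniquely; the associated projection onto $\mathfrak{L}_0\cong\mathfrak{L}$ along $Gr(T)$ is $\pi(a+b)=a-Tb$. A one-line substitution into the semidirect bracket then gives $\pi\bigl([x+0,\,Tu+u,\,Tv+v]_\theta\bigr)=[x,Tu,Tv]_{\mathfrak{L}}-T\bigl(D(x,Tu)v-\theta(x,Tv)u\bigr)=\theta_T(u,v)x$, and likewise $\pi\bigl([Tu+u,\,Tv+v,\,x+0]_\theta\bigr)=D_T(u,v)x$. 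Thus $(\theta_T,D_T)$ is exactly the projection along $Gr(T)$, onto the complement $\mathfrak{L}_0$, of the adjoint action of $G$ restricted to the subsystem $Gr(T)$ (recall $\theta(a,b)c=[c,a,b]$ and $D(a,b)c=[a,b,c]$ in the adjoint convention).

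The key step is then a general projection lemma: for any Lie triple system $G$, any subsystem $\mathfrak{h}$, and any vector-space complement $\mathfrak{m}$ with projection $\pi_{\mathfrak{m}}$ along $\mathfrak{h}$, the maps $(a,b)\mapsto\pi_{\mathfrak{m}}[\,\cdot\,,a,b]$ and $(a,b)\mapsto\pi_{\mathfrak{m}}[a,b,\cdot\,]$ define a representation of $\mathfrak{h}$ on $\mathfrak{m}$. I would prove this by comparing with the adjoint representation of $G$, which already satisfies both axioms of Definition \ref{def392.2} identically on all of $G$. Each composite in those axioms, when evaluated through $\pi_{\mathfrak{m}}$, contains an inner projection that may be discarded: the discarded remainder is a triple bracket of three elements of $\mathfrak{h}$ (two from the acting pair, one the inner $\pi_{\mathfrak{h}}$-part), hence lies in $\mathfrak{h}$ because $\mathfrak{h}$ is a subsystem, and is annihilated by $\pi_{\mathfrak{m}}$. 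Consequently both axioms for the projected action reduce to $\pi_{\mathfrak{m}}$ applied to the corresponding (vanishing) adjoint-representation axiom of $G$. Applying this with $\mathfrak{h}=Gr(T)$ and $\mathfrak{m}=\mathfrak{L}_0$, and transporting along the isomorphism $(\mathfrak{L}',[\cdot,\cdot,\cdot]_T)\cong Gr(T)$, yields precisely that $(\mathfrak{L},\theta_T)$ is a representation of $(\mathfrak{L}',[\cdot,\cdot,\cdot]_T)$.

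The only delicate point is the clean removal of inner projections in this lemma, which rests squarely on $Gr(T)$ being a subsystem, equivalently on Eq.\ (\ref{2112.7}); everything else is bookkeeping. As a fallback one could verify the two axioms of Definition \ref{def392.2} for $(\theta_T,D_T,[\cdot,\cdot,\cdot]_T)$ directly, repeatedly using Eq.\ (\ref{2112.7}) to rewrite each $[Tu,Tv,Tw]_{\mathfrak{L}}$ as $T(\cdots)$ and invoking the action conditions $\theta(x,y)u\in C(\mathfrak{L}')$ and $\theta(x,y)[u,v,w]_{\mathfrak{L}'}=0$ from Definition \ref{def2112.4} to kill the nested-bracket and $\lambda$-cross terms. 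This is routine but long and error-prone, which is exactly why I would favor the structural argument above.
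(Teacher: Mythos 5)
Your proof is correct, but it takes a genuinely different route from the paper's. The paper proves Lemma \ref{lem2113.2} by brute force: it expands both axioms of Definition \ref{def392.2} for $(\theta_T,D_T)$, substitutes the defining formulas, and cancels everything using the relative Rota-Baxter identity \eqref{2112.7} together with the action conditions of Definition \ref{def2112.4}; this occupies two long displayed computations. You instead let the earlier structural results do the work: the semidirect product $\mathfrak{L}\ltimes_\theta\mathfrak{L}'$ is a Lie triple system, $Gr(T)$ is a subsystem isomorphic (via $u\mapsto Tu+u$) to the descendent system $(\mathfrak{L}',[\cdot,\cdot,\cdot]_T)$, and $\theta_T,D_T$ are exactly the adjoint action of $Gr(T)$ compressed to the complement $\mathfrak{L}\oplus 0$ by the projection $\pi(a+b)=a-Tb$; your one-line substitutions verifying $\pi\bigl([x+0,Tu+u,Tv+v]_\theta\bigr)=\theta_T(u,v)x$ and $\pi\bigl([Tu+u,Tv+v,x+0]_\theta\bigr)=D_T(u,v)x$ check out against the definitions. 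Your key projection lemma is also sound: in each composite term of the two axioms the discarded piece is a bracket of three elements of the subsystem (the inner $\pi_{\mathfrak{h}}$-part together with the two acting elements), hence lies in $\mathfrak{h}$ and is killed by $\pi_{\mathfrak{m}}$, so both axioms reduce to $\pi_{\mathfrak{m}}$ applied to the adjoint-representation identities of the ambient system; of these, the second is literally Eq. \eqref{392.3}, and the first follows from \eqref{392.3} applied twice plus skew-symmetry in the first two arguments. (This is the Lie-triple analogue of the isotropy representation of a subalgebra on a vector-space complement.) What your route buys: it is shorter, conceptually explains the lemma, isolates the single place where \eqref{2112.7} is used (namely that $Gr(T)$ is a subsystem and $T[u,v,w]_T=[Tu,Tv,Tw]_{\mathfrak{L}}$), and the projection lemma is reusable for any subsystem--complement pair. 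What the paper's route buys: it is self-contained at the level of formulas, needing neither your projection lemma nor the standard fact (due to Yamaguti, and only implicitly asserted in the paper) that the adjoint representation of a Lie triple system on itself satisfies Definition \ref{def392.2}; your argument quietly invokes that fact, so you should state it explicitly, or verify it in the one line indicated above, to make the proof fully complete.
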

\begin{proof}
By calculation using the definitions of Lie triple system, representation and relative Rota-Baxter operator of weight $\lambda$, for all $u_i\in \mathfrak{L}'$, $i=1,2,3,4,$ $x\in \mathfrak{L}$, we have
\begin{align*}
&\Big(\theta_T(u_3,u_4)\theta_T(u_1,u_2)-\theta_T(u_2,u_4)\theta_T(u_1,u_3)-\theta_T(u_1,[u_2,u_3,u_4]_{\mathfrak{L}'})
+D_T(u_2,u_3)\theta_T(u_1,u_4)\Big)x\\
=&\theta_T(u_3,u_4)\Big([x,Tu_1,Tu_2]_{\mathfrak{L}}-T(D(x,Tu_1)u_2-\theta(x,Tu_2)u_1)\Big)\\
&-\theta_T(u_2,u_4)\Big([x,Tu_1,Tu_3]_{\mathfrak{L}}-T(D(x,Tu_1)u_3-\theta(x,Tu_3)u_1)\Big)\\
&-\Big([x,Tu_1,[Tu_2,Tu_3,Tu_4]_\mathfrak{L}]_\mathfrak{L}-T(D(x,Tu_1)[u_2,u_3,u_4]_{\mathfrak{L}'}
-\theta(x,[Tu_2,Tu_3,Tu_4]_\mathfrak{L})u_1\Big)\\
&+D_T(u_2,u_3)\Big([x,Tu_1,Tu_4]_{\mathfrak{L}}-T(D(x,Tu_1)u_4-\theta(x,Tu_4)u_1)\Big)\\
=&[[x,Tu_1,Tu_2]_{\mathfrak{L}},Tu_3,Tu_4]_{\mathfrak{L}}-T\Big(D([x,Tu_1,Tu_2]_{\mathfrak{L}},Tu_3)u_4
-\theta([x,Tu_1,Tu_2]_{\mathfrak{L}},Tu_4)u_3\Big)\\
&-[TD(x,Tu_1)u_2,Tu_3,Tu_4]_{\mathfrak{L}}+T\Big(D(TD(x,Tu_1)u_2,Tu_3)u_4-\theta(TD(x,Tu_1)u_2,Tu_4)u_3\Big)\\
&+[T\theta(x,Tu_2)u_1,Tu_3,Tu_4]_{\mathfrak{L}}-T\Big(D(T\theta(x,Tu_2)u_1,Tu_3)u_4-\theta(T\theta(x,Tu_2)u_1,Tu_4)u_3\Big)\\
&-[[x,Tu_1,Tu_3]_{\mathfrak{L}},Tu_2,Tu_4]_{\mathfrak{L}}+T\Big(D([x,Tu_1,Tu_3]_{\mathfrak{L}},Tu_2)u_4
-\theta([x,Tu_1,Tu_3]_{\mathfrak{L}},Tu_4)u_2\Big)\\
&+[TD(x,Tu_1)u_3,Tu_2,Tu_4]_{\mathfrak{L}}-T\Big(D(TD(x,Tu_1)u_3,Tu_2)u_4-\theta(TD(x,Tu_1)u_3,Tu_4)u_2\Big)\\
&-[T\theta(x,Tu_3)u_1,Tu_2,Tu_4]_{\mathfrak{L}}+T\Big(D(T\theta(x,Tu_3)u_1,Tu_2)u_4-\theta(T\theta(x,Tu_3)u_1,Tu_4)u_2\Big)\\
&+TD(x,Tu_1)\Big(D(Tu_2,Tu_3)u_4-\theta(Tu_2,Tu_4)u_3+\theta(Tu_3,Tu_4)u_2+\lambda[u_2,u_3,u_4]_{\mathfrak{L}'}\Big)\\
&-[x,Tu_1,[Tu_2,Tu_3,Tu_4]_{\mathfrak{L}}]_{\mathfrak{L}}-T\theta(x,[Tu_2,Tu_3,Tu_4]_{\mathfrak{L}})u_1\\
&+[Tu_2,Tu_3,[x,Tu_1,Tu_4]_{\mathfrak{L}}]_{\mathfrak{L}}-T\Big(\theta(Tu_3,[x,,Tu_1,Tu_4]_{\mathfrak{L}})u_2
-\theta(Tu_2,[x,,Tu_1,Tu_4]_{\mathfrak{L}})u_3\Big)\\
&-[Tu_2,Tu_3,TD(x,Tu_1)u_4]_{\mathfrak{L}}+T\Big(\theta(Tu_3,TD(x,Tu_1)u_4)u_2-\theta(Tu_2,TD(x,Tu_1)u_4)u_3\Big)\\
&+[Tu_2,Tu_3,T\theta(x,Tu_4)u_1]_{\mathfrak{L}}-T\Big(\theta(Tu_3,T\theta(x,Tu_4)u_1)u_2-\theta(Tu_2,T\theta(x,Tu_4)u_1)u_3\Big)\\
=&0,
\end{align*}
and
\begin{align*}
&\Big(\theta_T(u_3,u_4)D_T(u_1,u_2)-D_T(u_1,u_2)\theta_T(u_3,u_4)+\theta_T([u_1,u_2,u_3]_{\mathfrak{L}'},u_4)
+\theta_T(u_3,[u_1,u_2,u_4]_{\mathfrak{L}'})\Big)x\\
=&\theta_T(u_3,u_4)\Big([Tu_1,Tu_2,x]_\mathfrak{L}-T(\theta(Tu_2,x)u_1-\theta(Tu_1,x)u_2)\Big)\\
&-D_T(u_1,u_2)\Big([x,Tu_3,Tu_4]_\mathfrak{L}-T(D(x,Tu_3)u_4-\theta(x,Tu_4)u_3)\Big)\\
&+\Big([x,[Tu_1,Tu_2,Tu_3]_\mathfrak{L},Tu_4]_{\mathfrak{L}}-T(D(x,[Tu_1,Tu_2,Tu_3]_\mathfrak{L})u_4
-\theta(x,Tu_4)[u_1,u_2,u_3]_{\mathfrak{L}'})\Big)\\
&+\Big([x,Tu_3,[Tu_1,Tu_2,Tu_4]_\mathfrak{L}]_{\mathfrak{L}}-T(D(x,Tu_3)[u_1,u_2,u_4]_{\mathfrak{L}'}
-\theta(x,[Tu_1,Tu_2,Tu_4]_\mathfrak{L})u_3\Big)\\
=&[[Tu_1,Tu_2,x]_{\mathfrak{L}},Tu_3,Tu_4]_{\mathfrak{L}}-T\Big(D([Tu_1,Tu_2,x]_{\mathfrak{L}},Tu_3)u_4
-\theta([Tu_1,Tu_2,x]_{\mathfrak{L}},Tu_4)u_3\Big)\\
&-[T\theta(Tu_2,x)u_1,Tu_3,Tu_4]_{\mathfrak{L}}+T\Big(D(T\theta(Tu_2,x)u_1,Tu_3)u_4-\theta(T\theta(Tu_2,x)u_1,Tu_4)u_3\Big)\\
&+[T\theta(Tu_1,x)u_2,Tu_3,Tu_4]_{\mathfrak{L}}-T\Big(D(T\theta(Tu_1,x)u_2,Tu_3)u_4-\theta(T\theta(Tu_1,x)u_2,Tu_4)u_3\Big)\\
&-[Tu_1,Tu_2,[x,Tu_3,Tu_4]_{\mathfrak{L}}]_{\mathfrak{L}}+T\Big(\theta(Tu_2,[x,Tu_3,Tu_4]_{\mathfrak{L}})u_1
-\theta(Tu_1,[x,Tu_3,Tu_4]_{\mathfrak{L}})u_2\Big)\\
&+[Tu_1,Tu_2, TD(x,Tu_3)u_4]_{\mathfrak{L}}-T\Big(\theta(Tu_2,TD(x,Tu_3)u_4)u_1-\theta(Tu_1,TD(x,Tu_3)u_4)u_2\Big)\\
&-[Tu_1,Tu_2, T\theta(x,Tu_4)u_3]_{\mathfrak{L}}+T\Big(\theta(Tu_2,T\theta(x,Tu_4)u_3)u_1-\theta(Tu_1,T\theta(x,Tu_4)u_3)u_2\Big)\\
&+T\theta(x,Tu_4)\Big(D(Tu_1,Tu_2)u_3-\theta(Tu_1,Tu_3)u_2+\theta(Tu_2,Tu_3)u_1+\lambda[u_1,u_2,u_3]_{\mathfrak{L}'}\Big)\\
&+[x,[Tu_1,Tu_2,Tu_3]_{\mathfrak{L}},Tu_4]_{\mathfrak{L}}-TD(x,[Tu_1,Tu_2,Tu_3]_{\mathfrak{L}})u_4
+[x,Tu_3,[Tu_1,Tu_2,Tu_4]_{\mathfrak{L}}]_{\mathfrak{L}}\\
&-TD(x,Tu_3)\Big(D(Tu_1,Tu_2)u_4-\theta(Tu_1,Tu_4)u_2+\theta(Tu_1,Tu_4)u_2+\lambda[u_1,u_2,u_4]_{\mathfrak{L}'}\Big)\\
&+T\theta(x,[Tu_1,Tu_2,Tu_4]_{\mathfrak{L}})u_3\\
=&0.
\end{align*}
Thus, $(\mathfrak{L},\theta_T)$ is a representation of the descendent Lie triple system $(\mathfrak{L}',[\cdot,\cdot,\cdot]_T)$.
\end{proof}

Let $(\mathfrak{L},\theta_T)$ be a representation of a Lie triple system $(\mathfrak{L}',[\cdot,\cdot,\cdot]_{T})$. Denote by
\begin{align*}
C^{2n+1}_T(\mathfrak{L}',\mathfrak{L}):={\rm Hom}(\underbrace{\mathfrak{L}'\times...\times\mathfrak{L}'}_{(2n+1)},\mathfrak{L}),~~(n\geq0),
\end{align*}
which is the space of $(2n+1)$-cochains. For any $f\in C^{2n+1}_T(\mathfrak{L}',\mathfrak{L})$ satisfies
\begin{align*}
f(v_1,v_2,...,v_{2n-2},v,v,u)=0,
\end{align*}
and
\begin{align*}
f(v_1,v_2,...,v_{2n-2},v,u,w)+f(v_1,v_2,...,v_{2n-2},u,w,v)+f(v_1,v_2,...,v_{2n-2},w,v,u)=0,
\end{align*}
for all $u,v,w,v_i\in \mathfrak{L}'$, $i=1,2,...,2n-2$.

Let $\partial_T:C^{2n-1}_T(\mathfrak{L}',\mathfrak{L})\rightarrow C^{2n+1}_T(\mathfrak{L}',\mathfrak{L})$ be the corresponding coboundary operator of the Lie triple system $(\mathfrak{L}',[\cdot,\cdot,\cdot]_T)$ with coefficient in the representation $(\mathfrak{L},\theta_T)$. More precisely,
\begin{gather*}
\begin{aligned}
&(\partial_T f)(v_1,v_2,...,v_{2n+1})\\
=&~\theta_T(v_{2n},v_{2n+1})f(v_1,v_2,...,v_{2n-1})
-\theta_T(v_{2n-1},v_{2n+1})f(v_1,v_2,...,v_{2n})\\
&+\sum\limits_{i=1}^{n}(-1)^{i+1}D_T(v_{2i-1},v_{2i})f(v_1...,v_{2i-2},v_{2i+1},...,v_{2n+1})\\
&+\sum\limits_{i=1}^{n}\sum\limits_{j=2i+1}^{2n+1}
(-1)^{i+n+1}f(v_1...,v_{2i-2},v_{2i+1},...,[v_{2i-1},v_{2i},v_j]_T,...,v_{2n+1}),\label{322.17}
\end{aligned}
\end{gather*}
for any $v_1,v_2,...,v_{2n+1}\in \mathfrak{L}',$ $f\in C^{2n-1}_T(\mathfrak{L}',\mathfrak{L}).$

Note that, $f\in C^{1}_T(\mathfrak{L}',\mathfrak{L})$ is closed if and only if
\begin{gather}
\begin{aligned}\label{392.9}
&[f(v_1),Tv_2,Tv_3]_\mathfrak{L}+[Tv_1,f(v_2),Tv_3]_\mathfrak{L}+[Tv_1,Tv_2,f(v_3)]_\mathfrak{L}\\
&-T\Big(D(f(v_1),Tv_2)v_3-\theta(f(v_1),Tv_3)v_2+\theta(f(v_2),Tv_3)v_1\Big)\\
&-T\Big(\theta(Tv_2,f(v_3))v_1-\theta(Tv_1,f(v_3)v_2)-D(f(v_2),Tv_1)v_3\Big)\\
&-f\Big(\theta(Tv_2,Tv_3)v_1-\theta(Tv_1,Tv_3)v_2+D(Tv_1,Tv_2)v_3+[v_1,v_2,v_3]_{\mathfrak{L}'}\Big)=0.
\end{aligned}
\end{gather}

For any $\mathfrak{X}\in \mathfrak{L}\wedge\mathfrak{L},$ we define $\delta_T(\mathfrak{X}):\mathfrak{L}'\rightarrow \mathfrak{L}$ by
\begin{align}\label{392.10}
\delta_T(\mathfrak{X})v=TD(\mathfrak{X})v-[\mathfrak{X},Tv]_\mathfrak{L},~~\forall ~v\in \mathfrak{L}'.
\end{align}

\begin{proposition}\label{prop2113.3}
Let $T$ be a relative Rota-Baxter operator of weight $\lambda$ from $(\mathfrak{L},[\cdot,\cdot,\cdot]_\mathfrak{L})$ to
$(\mathfrak{L}',[\cdot,\cdot,\cdot]_{\mathfrak{L}'})$ with respect to an action $\theta$. Then $\delta_T(\mathfrak{X})$ is a $1$-cocycle of the Lie triple system $(\mathfrak{L}',[\cdot,\cdot,\cdot]_T)$ with coefficient $(\mathfrak{L},\theta_T).$
\end{proposition}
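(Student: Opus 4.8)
The plan is to verify that $\delta_T(\mathfrak{X})$ is closed, i.e.\ that $\partial_T\big(\delta_T(\mathfrak{X})\big)=0$; in degree one the cocycle condition $\partial_T f=0$ reads
\[
\theta_T(v_2,v_3)f(v_1)-\theta_T(v_1,v_3)f(v_2)+D_T(v_1,v_2)f(v_3)-f\big([v_1,v_2,v_3]_T\big)=0,\qquad f:=\delta_T(\mathfrak{X}),
\]
which is the content of \eqref{392.9}. Both $\mathfrak{X}\mapsto\delta_T(\mathfrak{X})$ and this condition are linear, so I would first reduce to a decomposable $\mathfrak{X}=x\wedge y$ with $x,y\in\mathfrak{L}$, for which $\delta_T(\mathfrak{X})v=TD(x,y)v-[x,y,Tv]_{\mathfrak{L}}$ (throughout I read $T:\mathfrak{L}'\to\mathfrak{L}$, as in \eqref{392.10}).

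Instead of expanding \eqref{392.9} by brute force, I would work inside the semidirect product $\mathfrak{L}\ltimes_\theta\mathfrak{L}'$. Let $\iota(v)=Tv+v$ be the Lie-triple-system isomorphism of $(\mathfrak{L}',[\cdot,\cdot,\cdot]_T)$ onto the subsystem $Gr(T)$ furnished by the descendent construction above, let $p:\mathfrak{L}\oplus\mathfrak{L}'\to\mathfrak{L}$ be the projection along $Gr(T)$ (so $p(x+u)=x-Tu$), and put $\Delta:=[x,y,\cdot\,]_\theta$. A one-line computation with $[\cdot,\cdot,\cdot]_\theta$ re-expresses every ingredient of the cocycle equation as a projection of the ambient structure: for $x\in\mathfrak{L}$, $u,v\in\mathfrak{L}'$,
\[
\theta_T(u,v)x=p\,[x,\iota(u),\iota(v)]_\theta,\qquad D_T(u,v)x=p\,[\iota(u),\iota(v),x]_\theta,\qquad \delta_T(\mathfrak{X})v=-\,p\,\Delta\big(\iota(v)\big).
\]

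The crux is then a single structural fact: $\Delta=[x,y,\cdot\,]_\theta$ is a derivation of $\mathfrak{L}\ltimes_\theta\mathfrak{L}'$, which is exactly the fundamental identity \eqref{392.3} for that Lie triple system. Writing $s_i:=\iota(v_i)\in Gr(T)$ and applying the derivation property to $\Delta[s_1,s_2,s_3]_\theta$, I would split it as $[\Delta s_1,s_2,s_3]_\theta+[s_1,\Delta s_2,s_3]_\theta+[s_1,s_2,\Delta s_3]_\theta$ and use the skew-symmetry \eqref{392.1} in the first two slots to align these with the three operator terms above. Since $\iota$ is an isomorphism, $f([v_1,v_2,v_3]_T)=-\,p\,\Delta[s_1,s_2,s_3]_\theta$; substituting the displayed identities, the whole of \eqref{392.9} collapses to the single assertion that in each of the three surviving terms the inner vector $\Delta s_i$ may be replaced by $p\,\Delta s_i$ without changing the value.

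That replacement is the only point requiring an argument, so it is where I expect the real content to sit. The difference is governed by $(1-p)\Delta s_i$, which lies in $Gr(T)$ by the definition of $p$; but $Gr(T)$ is a subsystem of $\mathfrak{L}\ltimes_\theta\mathfrak{L}'$ and the remaining arguments $s_j,s_k$ also lie in $Gr(T)$, so each bracket $[(1-p)\Delta s_i,\,s_j,\,s_k]_\theta$ (and likewise $[s_j,s_k,(1-p)\Delta s_i]_\theta$) stays in $Gr(T)$ and is therefore annihilated by $p$. This forces the three correction terms to vanish and yields $\partial_T\delta_T(\mathfrak{X})=0$. As an independent check one can instead plug $\delta_T(\mathfrak{X})v=TD(x,y)v-[x,y,Tv]_{\mathfrak{L}}$ straight into \eqref{392.9} and cancel by hand using the relative Rota-Baxter identity \eqref{2112.7}, the representation axioms of Definition \ref{def392.2}, and---crucially for the stray $\lambda$-terms---the two action conditions $\theta(x,y)u\in C(\mathfrak{L}')$ and $\theta(x,y)[u,v,w]_{\mathfrak{L}'}=0$ of Definition \ref{def2112.4}; this route is routine but long, which is exactly why the projection argument is preferable.
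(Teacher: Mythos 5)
Your proof is correct, but it proceeds by a genuinely different route than the paper. The paper verifies the cocycle condition by brute force: it substitutes $\delta_T(\mathfrak{X})v=TD(\mathfrak{X})v-[\mathfrak{X},Tv]_\mathfrak{L}$ into the degree-one coboundary, expands every term, and cancels everything by hand using \eqref{2112.7} and the representation axioms of Definition \ref{def392.2} --- exactly the ``routine but long'' alternative you mention at the end. Your argument instead runs through the semidirect product, and all of its ingredients check out: with $\iota(v)=Tv+v$, $p(x+u)=x-Tu$ and $\Delta=[x,y,\cdot\,]_\theta$, one indeed has $\theta_T(u,v)x=p[x,\iota(u),\iota(v)]_\theta$, $D_T(u,v)x=p[\iota(u),\iota(v),x]_\theta$, $\delta_T(\mathfrak{X})v=-p\Delta(\iota(v))$, and $\iota([v_1,v_2,v_3]_T)=[\iota(v_1),\iota(v_2),\iota(v_3)]_\theta$ (the last being precisely the relative Rota--Baxter identity \eqref{2112.7} via Theorem 2.7). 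Your key replacement step is also sound: $(1-p)$ maps into $Gr(T)$, $Gr(T)$ is a subsystem, and $p$ annihilates $Gr(T)$, so each correction term dies under $p$; then the fundamental identity \eqref{392.3} for $\mathfrak{L}\ltimes_\theta\mathfrak{L}'$ plus the skew-symmetry \eqref{392.1} in the first two slots kills what remains, and linearity in $\mathfrak{X}$ justifies the reduction to decomposables. What your approach buys is conceptual clarity and brevity: it shows the cocycle identity is nothing but the projection of the derivation property of $D(x,y)$ restricted to graph elements, and it localizes all uses of the action axioms of Definition \ref{def2112.4} inside the already-proved fact that $\mathfrak{L}\ltimes_\theta\mathfrak{L}'$ is a Lie triple system. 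What the paper's computation buys is self-containedness --- it never needs the graph characterization --- at the cost of a page-long cancellation that is easy to get wrong (and indeed the paper's displayed version of the cocycle equation drops a $\lambda$ that your abstract formulation with $[v_1,v_2,v_3]_T$ handles automatically).
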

\begin{proof}
For any $u_1,u_2,u_3\in V,$ we have
\small{\begin{align*}
&\partial_T\delta_T(\mathfrak{X})(u_1,u_2,u_3)\\
\overset{(\ref{392.9})}{=}&[Tu_1,Tu_2,\delta_T(\mathfrak{X})u_3]_\mathfrak{L}+[\delta_T(\mathfrak{X})u_1,Tu_2,Tu_3]_\mathfrak{L}
+[Tu_1,\delta_T(\mathfrak{X})u_2,Tu_3]_\mathfrak{L}\\
&-T\Big(D(\delta_T(\mathfrak{X})u_1,Tu_2)u_3-\theta(\delta_T(\mathfrak{X})u_1,Tu_3)u_2\Big)
+T\Big(D(\delta_T(\mathfrak{X})u_2,Tu_1)u_3-\theta(\delta_T(\mathfrak{X})u_2,Tu_3)u_1\Big)\\
&-T\Big(\theta(Tu_2,\delta_T(\mathfrak{X})u_3)u_1-\theta(Tu_1,\delta_T(\mathfrak{X})u_3)u_2\Big)
-\delta_T(\mathfrak{X})\Big(\theta(Tu_2,Tu_3)u_1-\theta(Tu_1,Tu_3)u_2\\
&+D(Tu_1,Tu_2)u_3+\lambda[u_1,u_2,u_3]_{\mathfrak{L}'}\Big)\\
\overset{(\ref{392.10})}{=}&[Tu_1,Tu_2,TD(\mathfrak{X})u_3]_\mathfrak{L}-[Tu_1,Tu_2,[\mathfrak{X},Tu_3]_\mathfrak{L}]_\mathfrak{L}
+[TD(\mathfrak{X})u_1,Tu_2,Tu_3]_\mathfrak{L}-[[\mathfrak{X},Tu_1]_\mathfrak{L},Tu_2,Tu_3]_\mathfrak{L}\\
&+[Tu_1,TD(\mathfrak{X})u_2,Tu_3]_\mathfrak{L}-[Tu_1,[\mathfrak{X},Tu_2]_\mathfrak{L},Tu_3]_\mathfrak{L}
-T\Big(D(TD(\mathfrak{X})u_1,Tu_2)u_3-D([\mathfrak{X},Tu_1]_\mathfrak{L},Tu_2)u_3\Big)\\
&+T\Big(\theta(TD(\mathfrak{X})u_1,Tu_3)u_2-\theta([\mathfrak{X},Tu_1]_\mathfrak{L},Tu_3)u_2\Big)
+T\Big(D(TD(\mathfrak{X})u_2,Tu_1)u_3-D([\mathfrak{X},Tu_2]_\mathfrak{L},Tu_1)u_3\Big)\\
&-T\Big(\theta(TD(\mathfrak{X})u_2,Tu_3)u_1-\theta([\mathfrak{X},Tu_2]_\mathfrak{L},Tu_3)u_1\Big)
-T\Big(\theta(Tu_2,TD(\mathfrak{X})u_3)u_1-\theta(Tu_2,[\mathfrak{X},Tu_3]_\mathfrak{L})u_1\Big)\\
&+T\Big(\theta(Tu_1,TD(\mathfrak{X})u_3)u_2-\theta(Tu_1,[\mathfrak{X},Tu_3]_\mathfrak{L})u_2\Big)
-TD(\mathfrak{X})\theta(Tu_2,Tu_3)u_1+TD(\mathfrak{X})\theta(Tu_1,Tu_3)u_2\\
&-TD(\mathfrak{X})D(Tu_1,Tu_2)u_3+[\mathfrak{X},T\theta(Tu_2,Tu_3)u_1]_\mathfrak{L}-[\mathfrak{X},T\theta(Tu_1,Tu_3)u_2]_\mathfrak{L}
+[\mathfrak{X},TD(Tu_1,Tu_2)u_3]_\mathfrak{L}\\
&+\lambda TD(\mathfrak{X})[u_1,u_2,u_3]_{\mathfrak{L}'}-\lambda [\mathfrak{X},[u_1,u_2,u_3]_{\mathfrak{L}'}]_\mathfrak{L}\\
\overset{(\ref{2112.7})}{=}&-TD(\mathfrak{X})\theta(Tu_2,Tu_3)u_1+TD(\mathfrak{X})\theta(Tu_1,Tu_3)u_2
-TD(\mathfrak{X})\theta(Tu_1,Tu_2)u_3\\
&+TD([\mathfrak{X},Tu_1]_\mathfrak{L},Tu_2)u_3-T\theta([\mathfrak{X},Tu_1]_\mathfrak{L},Tu_3)u_2
-TD([\mathfrak{X},Tu_2]_\mathfrak{L},Tu_1)u_3\\
&+T\theta([\mathfrak{X},Tu_2]_\mathfrak{L},Tu_3)u_1+T\theta(Tu_2,[\mathfrak{X},Tu_3]_\mathfrak{L})u_1
-T\theta(Tu_1,[\mathfrak{X},Tu_3]_\mathfrak{L})u_2\\
&+TD(Tu_1,Tu_2)D(\mathfrak{X})u_3+T\theta(Tu_2,Tu_3)D(\mathfrak{X})u_1-T\theta(Tu_1,Tu_3)D(\mathfrak{X})u_2\\
\overset{Def.\ref{def392.2}}{=}&0,
\end{align*}}
which implies that $\delta_T(\mathfrak{X})$ is a $1$-cocycle.
\end{proof}

Next, we introduce the cohomology theory of relative Rota-Baxter operators of weight $\lambda$ on Lie triple systems.

Let $T$ be a relative Rota-Baxter operator of weight $\lambda$ from a Lie triple system $(\mathfrak{L}',[\cdot,\cdot,\cdot]_{\mathfrak{L}'})$ to a Lie triple system $(\mathfrak{L},[\cdot,\cdot,\cdot]_\mathfrak{L})$ with respect to an action $\theta$. Define the space of $(2n-1)$-cochains by
\begin{equation}\label{2113.2}
C^{2n-1}_T(\mathfrak{L}',\mathfrak{L})=\left\{
\begin{array}{lr}C^{2n-1}(\mathfrak{L}',\mathfrak{L}),~~&n\geq1,\\
\mathfrak{L}\wedge\mathfrak{L},~~&n=0.
\end{array}
\right.
\end{equation}

Define $\partial:C^{2n-1}_T(\mathfrak{L}',\mathfrak{L})\rightarrow C^{2n+1}_T(\mathfrak{L}',\mathfrak{L})$ by
\begin{equation}\label{2113.3}
\partial=\left\{
\begin{array}{lr}\partial_T,~~&n\geq1,\\
\delta_T,~~&n=0.
\end{array}
\right.
\end{equation}

\begin{theorem}
$(\underset{n=0}{\overset{+\infty}{\oplus}} C^{2n-1}_T(\mathfrak{L}',\mathfrak{L}),\partial)$ is a cochain complex.
\end{theorem}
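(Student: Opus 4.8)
The plan is to reduce the assertion $\partial\circ\partial=0$ to the two structural results already established, since the coboundary $\partial$ is defined piecewise by (\ref{2113.3}) and so its square splits according to where in the complex we sit. First I would observe that for $n\geq 1$ the operator $\partial=\partial_T$ is by construction exactly the standard coboundary operator of the Lie triple system $(\mathfrak{L}',[\cdot,\cdot,\cdot]_T)$ with coefficients in $(\mathfrak{L},\theta_T)$. By Lemma \ref{lem2113.2}, $(\mathfrak{L},\theta_T)$ is a genuine representation of $(\mathfrak{L}',[\cdot,\cdot,\cdot]_T)$, so the identity $\delta\circ\delta=0$ proved in \cite{Yam} applies verbatim and yields $\partial_T\circ\partial_T=0$ on every $C^{2n-1}(\mathfrak{L}',\mathfrak{L})$ with $n\geq1$. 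No fresh computation is required in these degrees.

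The only composite not covered by the general theory is the one at the bottom of the complex, where the two clauses of the definition meet, namely
\begin{align*}
\mathfrak{L}\wedge\mathfrak{L}=C^{-1}_T(\mathfrak{L}',\mathfrak{L})\xrightarrow{\ \delta_T\ }C^{1}_T(\mathfrak{L}',\mathfrak{L})\xrightarrow{\ \partial_T\ }C^{3}_T(\mathfrak{L}',\mathfrak{L}).
\end{align*}
Here I would invoke Proposition \ref{prop2113.3} directly: it asserts that $\delta_T(\mathfrak{X})$ is a $1$-cocycle of $(\mathfrak{L}',[\cdot,\cdot,\cdot]_T)$ with coefficients in $(\mathfrak{L},\theta_T)$, which is precisely the statement $\partial_T\delta_T(\mathfrak{X})=0$ for all $\mathfrak{X}\in\mathfrak{L}\wedge\mathfrak{L}$. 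Combining this with the previous paragraph gives $\partial\circ\partial=0$ in every degree, whence $(\underset{n=0}{\overset{+\infty}{\oplus}} C^{2n-1}_T(\mathfrak{L}',\mathfrak{L}),\partial)$ is a cochain complex.

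In short, the theorem is an assembly of two facts rather than a new calculation: the genuinely heavy work lives in Lemma \ref{lem2113.2}, where the two representation axioms for $\theta_T$ are verified (this is what legitimizes $\partial_T\circ\partial_T=0$), and in Proposition \ref{prop2113.3}, whose long cocycle computation for $\delta_T(\mathfrak{X})$ settles the boundary case. The main obstacle is therefore not in this proof at all but in the compatibility of $\delta_T$ with $\partial_T$; once Proposition \ref{prop2113.3} is in hand, the remaining step is the purely formal observation that $\partial_T$ is a bona fide Lie-triple-system coboundary operator for the representation supplied by Lemma \ref{lem2113.2}, for which $\delta\circ\delta=0$ is already known.
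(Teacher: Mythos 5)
Your proposal is correct and follows exactly the paper's own argument: for $n\geq 1$ the identity $\partial_T\circ\partial_T=0$ comes from Yamaguti's result applied to the representation $(\mathfrak{L},\theta_T)$ supplied by Lemma \ref{lem2113.2}, and the bottom composite $\partial_T\circ\delta_T=0$ is precisely Proposition \ref{prop2113.3}. Your write-up is in fact more explicit than the paper's one-line proof, which compresses the same two ingredients into a single sentence.
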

\begin{proof}
It follows from Proposition \ref{prop2113.3} and the fact that $\delta_T$ is the corresponding coboundary operator of the descendent Lie triple system $(\mathfrak{L}',[\cdot,\cdot,\cdot]_T)$ with coefficient in the representation $(\mathfrak{L},\theta_T)$ directly.
\end{proof}

\begin{definition}
The coboundary of the cochain complex $(\underset{n=0}{\overset{+\infty}{\oplus}} C^{2n-1}_T(\mathfrak{L}',\mathfrak{L}),\partial)$ is taken to be the cohomology for the relative Rota-Baxter operator $T$ of weight $\lambda$. Denote the set of $(2n-1)$-cocycles by $Z_T^{2n-1}(\mathfrak{L}',\mathfrak{L})$, the set of $(2n-1)$-coboundaries by $B_T^{2n-1}(\mathfrak{L}',\mathfrak{L})$ and $(2n-1)$-th cohomology group by
\begin{align}\label{2113.70}
H_T^{2n-1}(\mathfrak{L}',\mathfrak{L})=Z_T^{2n-1}(\mathfrak{L}',\mathfrak{L})/B_T^{2n-1}(\mathfrak{L}',\mathfrak{L}).
\end{align}
\end{definition}

\subsection{Infinitesimal deformations of relative Rota-Baxter operators}
In this subsection, we use the cohomology theory to characterize infinitesimal deformations of relative Rota-Baxter operators of weight $\lambda$ on Lie triple systems.

Let $(\mathfrak{L},[\cdot,\cdot,\cdot]_\mathfrak{L})$ be a Lie triple system over $\mathbb{F}$ and $\mathbb{F}[t]$ be the polynomial ring in one variable $t$. Then $\mathbb{F}[t]/(t^2)\otimes_\mathbb{F}\mathfrak{L}$ is an $\mathbb{F}[t]/(t^2)$-module. Here we denote $f(t)\otimes_\mathbb{F}x\in\mathbb{F}[t]/(t^2)\otimes_\mathbb{F}\mathfrak{L}$ by $f(t)x$, where $f(t)\in \mathbb{F}[t]/(t^2)$ and all the vector spaces are finite dimensional vector spaces over $\mathbb{F}$. Moreover, $\mathbb{F}[t]/(t^2)\otimes_\mathbb{F}\mathfrak{L}$ is a Lie triple system over $\mathbb{F}[t]/(t^2)$, where the Lie triple system is defined by
\begin{align*}
[f_1(t)x_1,f_2(t)x_2,f_3(t)x_3]=f_1(t)f_2(t)f_3(t)[x_1,x_2,x_3]_\mathfrak{L},
\end{align*}
for $f_i(t)\in \mathbb{F}[t]/(t^2)$, $1\leq i \leq3$, $x_1,x_2,x_3\in \mathfrak{L}$.

\begin{definition}
Let $T:\mathfrak{L}'\rightarrow \mathfrak{L}$ be a relative Rota-Baxter operator of weight $\lambda$ from a Lie triple system $(\mathfrak{L}',[\cdot,\cdot,\cdot]_{\mathfrak{L}'})$ to a Lie triple system $(\mathfrak{L},[\cdot,\cdot,\cdot]_\mathfrak{L})$ with respect to an action $\theta$. Let $\mathfrak{T}:\mathfrak{L}' \rightarrow \mathfrak{L}$ be a linear map. If $T_t=T+t\mathfrak{T}$ is still a relative Rota-Baxter operator of weight $\lambda$ for all $t$, we say that $\mathfrak{T}$ generates an infinitesimal deformation of $T$.
\end{definition}

Since $T_t=T+t\mathfrak{T}$ is a relative Rota-Baxter operator of weight $\lambda$, by consider the coefficients of $t^n$, for any $u,v,w\in \mathfrak{L}'$. We have
\begin{gather}\label{2113.4}
\begin{aligned}
&[\mathfrak{T}u,Tv,Tw]_\mathfrak{L}+[Tu,\mathfrak{T}v,Tw]_\mathfrak{L}+[Tu,Tv,\mathfrak{T}w]_\mathfrak{L}\\
=&T\Big(\theta(Tv,\mathfrak{T}w)u-\theta(Tu,\mathfrak{T}w)v+D(\mathfrak{T}u, Tv)w\\
&+\theta(\mathfrak{T}v, Tw)u-\theta(\mathfrak{T}u, Tw)v+D(Tu,\mathfrak{T}v)w\Big)\\
&+\mathfrak{T}\Big(D(Tu,Tv)w+\theta(Tv,Tw)u-\theta(Tu,Tw)v+\lambda[u,v,w]_{\mathfrak{L}'}\Big),
\end{aligned}
\end{gather}
\begin{gather*}\label{2113.5}
\begin{aligned}
&[\mathfrak{T}u,\mathfrak{T}v,Tw]_\mathfrak{L}+[Tu,\mathfrak{T}v,\mathfrak{T}w]_\mathfrak{L}
+[\mathfrak{T}u,Tv,\mathfrak{T}w]_\mathfrak{L}\\
=&\mathfrak{T}\Big(\theta(\mathfrak{T}v,Tw)u-\theta(\mathfrak{T}u,Tw)v+D(Tu, \mathfrak{T}v)w+\theta(Tv, \mathfrak{T}w)u-\theta(Tu, \mathfrak{T}w)v+D(\mathfrak{T}u,Tv)w\Big)\\
&+T\Big(D(\mathfrak{T}u,\mathfrak{T}v)w+\theta(\mathfrak{T}v,\mathfrak{T}w)u-\theta(\mathfrak{T}u,\mathfrak{T}w)v\Big),
\end{aligned}
\end{gather*}
\begin{align*}
[\mathfrak{T}u,\mathfrak{T}v,\mathfrak{T}w]_\mathfrak{L}=\mathfrak{T}\Big(D(\mathfrak{T}u,\mathfrak{T}v)w
-\theta(\mathfrak{T}u,\mathfrak{T}w)v+\theta(\mathfrak{T}v,\mathfrak{T}w)u\Big).
\end{align*}

Note that Eq. (\ref{2113.4}) means that $\mathfrak{T}$ is a $1$-cocycle of the relative Rota-Baxter operator $T$. Hence $\mathfrak{T}$ defines a cohomology class in $H^1_T(\mathfrak{L}',\mathfrak{L})$. 

\begin{definition}
Let $T$ be a relative Rota-Baxter operator of weight $\lambda$ from a Lie triple system $(\mathfrak{L}',[\cdot,\cdot,\cdot]_{\mathfrak{L}'})$ to a Lie triple system $(\mathfrak{L},[\cdot,\cdot,\cdot]_\mathfrak{L})$ with respect to an action $\theta$. Two 1-parameter infinitesimal deformations $T_t^1=T+t\mathfrak{T}_1$ and $T_t^2=T+t\mathfrak{T}_2$ are said to be equivalent if there exist $\mathfrak{X}\in \mathfrak{L}\wedge\mathfrak{L}$ such that $({\rm id}_{\mathfrak{L}}+t[\mathfrak{X},-],{\rm id}_{\mathfrak{L}'}+tD(\mathfrak{X}))$ is a homomorphism from $T_t^1$ to $T_t^2$. In particular, an infinitesimal deformation $T_t=T+t\mathfrak{T}$ of a relative Rota-Baxter operator $T$ is said to be trivial if there exists $\mathfrak{X}\in \mathfrak{L}\wedge\mathfrak{L}$ such that $({\rm id}_{\mathfrak{L}}+t[\mathfrak{X},-],{\rm id}_{\mathfrak{L}'}+tD(\mathfrak{X}))$ is a homomorphism from $T_t$ to $T$.
\end{definition}

Let $({\rm id}_{\mathfrak{L}}+t[\mathfrak{X},-],{\rm id}_{\mathfrak{L}'}+tD(\mathfrak{X}))$ is a homomorphism from $T_t^1$ to $T_t^2$. By Eq. (\ref{2112.8}) we get,
\begin{align*}
({\rm id}_{\mathfrak{L}}+t[\mathfrak{X},-])(T+t\mathfrak{T}_1)(u)=(T+t\mathfrak{T}_2)({\rm id}_{\mathfrak{L}'}+tD(\mathfrak{X}))(u),
\end{align*}
which implies
\begin{align}\label{2113.7}
\left\{
\begin{array}{lr}\mathfrak{T}_1(u)-\mathfrak{T}_2(u)=TD(\mathfrak{X})(u)-[\mathfrak{X},Tu]_\mathfrak{L},~~&\\
~~[\mathfrak{X},\mathfrak{T}_1(u)]_\mathfrak{L}=\mathfrak{T}_2D(\mathfrak{X})(u).&
\end{array}
\right.
\end{align}
 Now we are ready to give the main result in this section.

\begin{theorem}
Let $T$ be a relative Rota-Baxter operator of weight $\lambda$ from a Lie triple system $(\mathfrak{L}',[\cdot,\cdot,\cdot]_{\mathfrak{L}'})$ to a Lie triple system $(\mathfrak{L},[\cdot,\cdot,\cdot]_\mathfrak{L})$ with respect to an action $\theta$. If two 1-parameter infinitesimal deformations $T_t^1=T+t\mathfrak{T}_1$ and $T_t^2=T+t\mathfrak{T}_2$ are equivalent, then $\mathfrak{T}_1$ and $\mathfrak{T}_2$ are in the same cohomology class in $H^1_T(\mathfrak{L}',\mathfrak{L})$.
\end{theorem}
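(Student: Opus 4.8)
The plan is to unwind the definition of equivalence into the two identities recorded in Eq.~(\ref{2113.7}) and then recognize the first of them as the assertion that $\mathfrak{T}_1-\mathfrak{T}_2$ equals the coboundary $\delta_T(\mathfrak{X})$.

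First, I would begin from the hypothesis that there is some $\mathfrak{X}\in\mathfrak{L}\wedge\mathfrak{L}$ for which $(\mathrm{id}_{\mathfrak{L}}+t[\mathfrak{X},-],\,\mathrm{id}_{\mathfrak{L}'}+tD(\mathfrak{X}))$ is a homomorphism from $T_t^1$ to $T_t^2$. Expanding the homomorphism condition (\ref{2112.8}) and comparing coefficients of $t$ gives exactly the system (\ref{2113.7}); its first component reads
\[
\mathfrak{T}_1(u)-\mathfrak{T}_2(u)=TD(\mathfrak{X})(u)-[\mathfrak{X},Tu]_{\mathfrak{L}},\qquad\forall\,u\in\mathfrak{L}'.
\]

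Next, I would match this against the definition (\ref{392.10}) of $\delta_T$, namely $\delta_T(\mathfrak{X})v=TD(\mathfrak{X})v-[\mathfrak{X},Tv]_{\mathfrak{L}}$. The two right-hand sides agree verbatim, so $\mathfrak{T}_1-\mathfrak{T}_2=\delta_T(\mathfrak{X})$ as elements of $C^1_T(\mathfrak{L}',\mathfrak{L})$. Since $\partial=\delta_T$ on $C^{-1}_T(\mathfrak{L}',\mathfrak{L})=\mathfrak{L}\wedge\mathfrak{L}$ by (\ref{2113.3}), the cochain $\delta_T(\mathfrak{X})$ belongs to $B^1_T(\mathfrak{L}',\mathfrak{L})$, whence $\mathfrak{T}_1-\mathfrak{T}_2$ is a coboundary.

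Finally, I would recall that the discussion following Eq.~(\ref{2113.4}) already shows each $\mathfrak{T}_i$ is a $1$-cocycle, so both define classes in $H^1_T(\mathfrak{L}',\mathfrak{L})$; as their difference is a coboundary, these classes coincide, which is the claim. I do not expect a genuine obstacle here: the one point worth flagging is that the second identity of (\ref{2113.7}) is a higher-order constraint coming from the deformed bracket and is not needed for the cohomological conclusion, so the entire argument reduces to the single observation that the first identity of (\ref{2113.7}) is literally the formula defining $\delta_T(\mathfrak{X})$.
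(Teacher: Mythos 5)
Your proposal is correct and follows essentially the same route as the paper: both derive the system (\ref{2113.7}) from the homomorphism condition (\ref{2112.8}) by comparing coefficients of $t$, and then identify the first identity with $\mathfrak{T}_1-\mathfrak{T}_2=\delta_T(\mathfrak{X})=(\partial\mathfrak{X})$, so the two cocycles differ by a coboundary and define the same class in $H^1_T(\mathfrak{L}',\mathfrak{L})$. Your explicit remarks that each $\mathfrak{T}_i$ is a $1$-cocycle by (\ref{2113.4}) and that the second identity of (\ref{2113.7}) is not needed merely make visible what the paper leaves implicit.
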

\begin{proof}
It is easy to see from the first condition of Eq. (\ref{2113.7}) that
\begin{align*}
\mathfrak{T}_1(u)=\mathfrak{T}_2(u)+(\partial\mathfrak{X})(u),~~~\forall~u\in \mathfrak{L}',
\end{align*}
which implies that $\mathfrak{T}_1$ and $\mathfrak{T}_2$ are in the same cohomology class.
\end{proof}

\subsection{Functorial properties}

In this subsection, we show that the cohomology theory for relative Rota-Baxter operators of weight $\lambda$ on Lie triple systems enjoys certain functorial properties.

\begin{proposition}\label{prop2113.9}
Let $T$ and $T'$ be relative Rota-Baxter operators of weight $\lambda$ from a Lie triple system $(\mathfrak{L}',[\cdot,\cdot,\cdot]_{\mathfrak{L}'})$ to a Lie triple system $(\mathfrak{L},[\cdot,\cdot,\cdot]_\mathfrak{L})$ with respect to an action $\theta$. Let $(\psi_\mathfrak{L},\psi_{\mathfrak{L}'})$ be a homomorphism from $T$ to $T'$.

{\rm (i)} $\psi_{\mathfrak{L}'}$ is also a homomorphism from the descendent Lie triple system $(\mathfrak{L}',[\cdot,\cdot,\cdot]_T)$ of $T$ to the descendent Lie triple system $(\mathfrak{L}',[\cdot,\cdot,\cdot]_{T'})$ of $T'$.

{\rm (ii)} The induced representation $(\mathfrak{L},\theta_T)$ of the Lie triple system $(\mathfrak{L}',[\cdot,\cdot,\cdot]_T)$ and the induced representation $(\mathfrak{L},\theta_{T'})$ of the Lie triple system $(\mathfrak{L}',[\cdot,\cdot,\cdot]_{T'})$ satisfies the following relations$:$
\begin{align*}
\psi_\mathfrak{L}\theta_T(u,v)=\theta_{T'}(\psi_{\mathfrak{L}'}(u),\psi_{\mathfrak{L}'}(v))\psi_{\mathfrak{L}},
\end{align*}
and
\begin{align*}
\psi_\mathfrak{L} D_T(u,v)=D_{T'}(\psi_{\mathfrak{L}'}(u),\psi_{\mathfrak{L}'}(v))\psi_{\mathfrak{L}},
\end{align*}
for all $u,v\in \mathfrak{L}'$.

That is, the following diagram commutes$:$
\begin{equation*}
\begin{CD}
\mathfrak{L}@>\psi_\mathfrak{L}>> \mathfrak{L} \\
@V\theta_T(u,v)VV@VV\theta_{T'}(\psi_{\mathfrak{L}'}(u),\psi_{\mathfrak{L}'}(v)) V\\
\mathfrak{L}\ @>>\psi_\mathfrak{L}>\mathfrak{L}.
\end{CD}
\end{equation*}
\end{proposition}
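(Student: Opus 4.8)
The plan is to verify all three assertions by direct substitution into the defining formulas, relying only on the three intertwining conditions (\ref{2112.8})--(\ref{2112.10}) together with the fact that, by hypothesis, $\psi_{\mathfrak{L}}$ and $\psi_{\mathfrak{L}'}$ are homomorphisms of the ambient Lie triple systems (so they preserve $[\cdot,\cdot,\cdot]_{\mathfrak{L}}$ and $[\cdot,\cdot,\cdot]_{\mathfrak{L}'}$ respectively). No new structural identity is needed; the point is that conditions (\ref{2112.8})--(\ref{2112.10}) are exactly what makes each piece of derived data---the descendent bracket $[\cdot,\cdot,\cdot]_T$, the induced action $\theta_T$, and $D_T$---transform covariantly under $(\psi_{\mathfrak{L}},\psi_{\mathfrak{L}'})$.

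For (i), I would expand $\psi_{\mathfrak{L}'}([u,v,w]_T)$ using $[u,v,w]_T=D(Tu,Tv)w+\theta(Tv,Tw)u-\theta(Tu,Tw)v+\lambda[u,v,w]_{\mathfrak{L}'}$ and push $\psi_{\mathfrak{L}'}$ through term by term. The two $\theta$-terms are handled by (\ref{2112.9}) and the $D$-term by (\ref{2112.10}), turning each into the corresponding expression for the action evaluated at $\psi_{\mathfrak{L}}(Tu),\psi_{\mathfrak{L}}(Tv),\psi_{\mathfrak{L}}(Tw)$; then (\ref{2112.8}) in the form $\psi_{\mathfrak{L}}\circ T=T'\circ\psi_{\mathfrak{L}'}$ replaces each $\psi_{\mathfrak{L}}(T\,\cdot)$ by $T'\psi_{\mathfrak{L}'}(\cdot)$. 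The remaining term is treated by multiplicativity of $\psi_{\mathfrak{L}'}$ for $[\cdot,\cdot,\cdot]_{\mathfrak{L}'}$. Collecting the four pieces reproduces exactly $[\psi_{\mathfrak{L}'}(u),\psi_{\mathfrak{L}'}(v),\psi_{\mathfrak{L}'}(w)]_{T'}$, which is the claim.

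For the first relation in (ii), I would fix $x\in\mathfrak{L}$ and compute $\psi_{\mathfrak{L}}(\theta_T(u,v)x)$ from $\theta_T(u,v)x=[x,Tu,Tv]_{\mathfrak{L}}-T\bigl(D(x,Tu)v-\theta(x,Tv)u\bigr)$. The bracket term becomes $[\psi_{\mathfrak{L}}(x),T'\psi_{\mathfrak{L}'}(u),T'\psi_{\mathfrak{L}'}(v)]_{\mathfrak{L}}$ because $\psi_{\mathfrak{L}}$ preserves $[\cdot,\cdot,\cdot]_{\mathfrak{L}}$ and by (\ref{2112.8}); for the second term I would pull $\psi_{\mathfrak{L}}$ inside $T$ via (\ref{2112.8}) to obtain $T'\psi_{\mathfrak{L}'}(\cdots)$ and then apply (\ref{2112.10}) and (\ref{2112.9}) to the $D$- and $\theta$-pieces. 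The outcome is precisely $\theta_{T'}(\psi_{\mathfrak{L}'}(u),\psi_{\mathfrak{L}'}(v))\psi_{\mathfrak{L}}(x)$, which is the asserted identity and, equivalently, the commutativity of the displayed square. The second relation then comes for free: since $D_T(u,v)=\theta_T(v,u)-\theta_T(u,v)$ and $D_{T'}(u',v')=\theta_{T'}(v',u')-\theta_{T'}(u',v')$, subtracting the two instances of the $\theta_T$-relation with $u,v$ interchanged gives $\psi_{\mathfrak{L}}D_T(u,v)=D_{T'}(\psi_{\mathfrak{L}'}(u),\psi_{\mathfrak{L}'}(v))\psi_{\mathfrak{L}}$.

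There is no genuine obstacle: each step is a one-line application of a single hypothesis, and the only care required is bookkeeping---matching every factor inside $\theta$, $D$, and the triple brackets to the correct intertwining relation, and tracking the argument order so that the swap $u\leftrightarrow v$ in the $D_T$ computation is carried out consistently. Accordingly I would write out (i) and the $\theta_T$-relation compactly and derive the $D_T$-relation as an immediate consequence rather than repeating the calculation.
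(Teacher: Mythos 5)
Your proposal is correct and follows essentially the same route as the paper: expand $[\cdot,\cdot,\cdot]_T$ and $\theta_T$ by their definitions, push $\psi_{\mathfrak{L}'}$ and $\psi_{\mathfrak{L}}$ through term by term using (\ref{2112.9})--(\ref{2112.10}) and the homomorphism property, and convert every $\psi_{\mathfrak{L}}(T\,\cdot)$ into $T'\psi_{\mathfrak{L}'}(\cdot)$ via (\ref{2112.8}). Your derivation of the $D_T$-relation from the $\theta_T$-relation by antisymmetry of $D_T=\theta_T(v,u)-\theta_T(u,v)$ is a slightly cleaner shortcut than the paper's ``similarly'' (which suggests repeating the computation), but it is the same argument in substance.
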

\begin{proof}
{\rm (i)} By Eqs. (\ref{2112.8})-(\ref{2112.10}) and Lemma \ref{lem2113.2}, we have
\begin{align*}
\psi_{\mathfrak{L}'}([u,v,w]_T)=&\psi_{\mathfrak{L}'}\Big(D(Tu,Tv)w-\theta(Tu,Tw)v+\theta(Tv,Tw)u+\lambda[u,v,w]_{\mathfrak{L}'}\Big)\\
=&D\Big(\psi_\mathfrak{L}Tu,\psi_\mathfrak{L}Tv\Big)\psi_{\mathfrak{L}'}(w)
-\theta\Big(\psi_\mathfrak{L}Tu,\psi_\mathfrak{L}Tw\Big)\psi_{\mathfrak{L}'}(v)\\
&+\theta\Big(\psi_\mathfrak{L}Tv,\psi_\mathfrak{L}Tw\Big)\psi_{\mathfrak{L}'}(u)
+\lambda\psi_{\mathfrak{L}'}([u,v,w]_{\mathfrak{L}'})\\
=&D\Big(T'\psi_{\mathfrak{L}'}(u),T'\psi_{\mathfrak{L}'}(v)\Big)\psi_{\mathfrak{L}'}(w)
-\theta\Big(T'\psi_{\mathfrak{L}'}(u),T'\psi_{\mathfrak{L}'}(w)\Big)\psi_{\mathfrak{L}'}(v)\\
&+\theta\Big(T'\psi_{\mathfrak{L}'}(v),T'\psi_{\mathfrak{L}'}(w)\Big)\psi_{\mathfrak{L}'}(u)
+\lambda[\psi_{\mathfrak{L}'}(u),\psi_{\mathfrak{L}'}(v),\psi_{\mathfrak{L}'}(w)]_{\mathfrak{L}'}\\
=&[\psi_{\mathfrak{L}'}(u),\psi_{\mathfrak{L}'}(v),\psi_{\mathfrak{L}'}(w)]_{T'}.
\end{align*}

{\rm (ii)} Now, according to Eqs. (\ref{2112.8})-(\ref{2112.10}) and Lemma \ref{lem2113.2}, for all $u,v\in \mathfrak{L}'$, $x\in \mathfrak{L}$, we have
\begin{align*}
\psi_\mathfrak{L}\Big(\theta_T(u,v)x\Big)=&\psi_\mathfrak{L}\Big([x,Tu,Tv]_\mathfrak{L}-T(D(x,Tu)v-\theta(x,Tv)u)\Big)\\
=&[\psi_\mathfrak{L}(x),\psi_\mathfrak{L}(Tu),\psi_\mathfrak{L}(Tv)]_\mathfrak{L}
-T'\Big(\psi_{\mathfrak{L}'}(D(x,Tu)v)-\psi_{\mathfrak{L}'}(\theta(x,Tv)u)\Big)\\
=&[\psi_\mathfrak{L}(x),T'\psi_{\mathfrak{L}'}(u),T'\psi_{\mathfrak{L}'}(v)]_\mathfrak{L}
-T'\Big(D(\psi_\mathfrak{L}(x),T'\psi_{\mathfrak{L}'}(u))\psi_{\mathfrak{L}'}(v))\\
&-\theta(\psi_\mathfrak{L}(x),T'\psi_{\mathfrak{L}'}(v))\psi_{\mathfrak{L}'}(u))\Big)\\
=&\theta_T'(\psi_{\mathfrak{L}'}(u),\psi_{\mathfrak{L}'}(v))\psi_\mathfrak{L}(x).
\end{align*}

Similarly, we have 
\begin{align*}
\psi_\mathfrak{L} D_T(u,v)=D_{T'}(\psi_{\mathfrak{L}'}(u),\psi_{\mathfrak{L}'}(v))\psi_{\mathfrak{L}},
\end{align*}
for all $u,v\in \mathfrak{L}'$. We finish the proof.
\end{proof}

Let $T$ and $T'$ be relative Rota-Baxter operators of weight $\lambda$ from a Lie triple system $(\mathfrak{L}',[\cdot,\cdot,\cdot]_{\mathfrak{L}'})$ to a Lie triple system $(\mathfrak{L},[\cdot,\cdot,\cdot]_\mathfrak{L})$ with respect to an action $\theta$. Let $(\psi_\mathfrak{L},\psi_{\mathfrak{L}'})$ be a homomorphism from $T$ to $T'$ in which $\psi_{\mathfrak{L}'}$ is invertible. Define a map
\begin{align*}
p:C^{2n-1}_T(\mathfrak{L}',\mathfrak{L})\rightarrow C^{2n-1}_{T'}(\mathfrak{L}',\mathfrak{L})
\end{align*}
by
\begin{align*}
p(\omega)(u_1,u_2,..., u_{2n-1})
=\psi_\mathfrak{L}\Big(\omega(\psi^{-1}_{\mathfrak{L}'}(u_1),\psi^{-1}_{\mathfrak{L}'}(u_2),..., \psi^{-1}_{\mathfrak{L}'}(u_{2n-1}))\Big),
\end{align*}
for all $\omega\in C^{2n-1}_T(\mathfrak{L}',\mathfrak{L})$, $u_i\in \mathfrak{L}'$, $1=1,2,...,2n-1$.

\begin{theorem}
Keep the same notations as above, $p$ is a cochain map from the cochain complex $(\underset{n=1}{\overset{+\infty}{\oplus}} C^{2n-1}_T(\mathfrak{L}',\mathfrak{L}),\partial_T)$ to the cochain complex $(\underset{n=1}{\overset{+\infty}{\oplus}} C^{2n-1}_{T'}(\mathfrak{L}',\mathfrak{L}),\partial_{T'})$. Consequently, it induces a homomorphism $p_*$ from the cohomology group $H^{2n-1}_T(\mathfrak{L}',\mathfrak{L})$ to $H^{2n-1}_{T'}(\mathfrak{L}',\mathfrak{L})$.
\end{theorem}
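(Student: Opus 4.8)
The plan is to verify directly that $p$ intertwines the two coboundary operators, i.e.\ that $p\circ\partial_T=\partial_{T'}\circ p$; once this commutation is established, the passage to cohomology is a formal consequence of homological algebra, since a cochain map sends cocycles to cocycles and coboundaries to coboundaries, and hence descends to the quotients defining $H^{2n-1}_T$ and $H^{2n-1}_{T'}$.

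First I would check that $p$ is well defined, i.e.\ that $p(\omega)$ lies in $C^{2n-1}_{T'}(\mathfrak{L}',\mathfrak{L})$ whenever $\omega\in C^{2n-1}_T(\mathfrak{L}',\mathfrak{L})$. Because $\psi_{\mathfrak{L}'}^{-1}$ is linear, the two symmetry conditions defining a cochain (vanishing on a repeated pair in the last three slots, and the cyclic identity in the last three slots) are transported from $\omega$ to $p(\omega)$: substituting $\psi_{\mathfrak{L}'}^{-1}$ of repeated or cyclically permuted arguments reproduces the same relations, and applying the linear map $\psi_\mathfrak{L}$ preserves the vanishing. Here the invertibility of $\psi_{\mathfrak{L}'}$ is exactly what is needed for the substitution $u_i\mapsto\psi_{\mathfrak{L}'}^{-1}(u_i)$ to make sense.

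The heart of the argument is the term-by-term comparison. Writing $w_i:=\psi_{\mathfrak{L}'}^{-1}(u_i)$, so that $u_i=\psi_{\mathfrak{L}'}(w_i)$, I would expand $p(\partial_T\omega)(u_1,\dots,u_{2n+1})=\psi_\mathfrak{L}\big((\partial_T\omega)(w_1,\dots,w_{2n+1})\big)$ using the explicit formula for $\partial_T$ and then push $\psi_\mathfrak{L}$ through each summand. For the two leading terms and the $D_T$-terms I would apply the intertwining relations
\[
\psi_\mathfrak{L}\,\theta_T(u,v)=\theta_{T'}(\psi_{\mathfrak{L}'}(u),\psi_{\mathfrak{L}'}(v))\,\psi_\mathfrak{L},\qquad
\psi_\mathfrak{L}\,D_T(u,v)=D_{T'}(\psi_{\mathfrak{L}'}(u),\psi_{\mathfrak{L}'}(v))\,\psi_\mathfrak{L}
\]
from Proposition \ref{prop2113.9}(ii); since $\psi_{\mathfrak{L}'}(w_i)=u_i$ and $\psi_\mathfrak{L}\big(\omega(w_1,\dots,w_{2n-1})\big)=p(\omega)(u_1,\dots,u_{2n-1})$, each such summand becomes precisely the corresponding summand of $(\partial_{T'}p(\omega))(u_1,\dots,u_{2n+1})$. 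For the bracket terms I would use Proposition \ref{prop2113.9}(i), namely that $\psi_{\mathfrak{L}'}$ is a homomorphism of descendent systems, which yields $[w_{2i-1},w_{2i},w_j]_T=\psi_{\mathfrak{L}'}^{-1}\big([u_{2i-1},u_{2i},u_j]_{T'}\big)$; substituting this into $\psi_\mathfrak{L}\big(\omega(\dots,[w_{2i-1},w_{2i},w_j]_T,\dots)\big)$ and reading off the definition of $p$ turns it into $p(\omega)(\dots,[u_{2i-1},u_{2i},u_j]_{T'},\dots)$, again matching the corresponding term of $\partial_{T'}p(\omega)$. Summing over all terms (with the same signs, which are untouched by the linear map $\psi_\mathfrak{L}$) gives $p(\partial_T\omega)=\partial_{T'}(p(\omega))$.

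I do not expect any genuine obstacle here: the content is entirely carried by the three compatibility identities of Proposition \ref{prop2113.9}, and the only care required is index bookkeeping---keeping track of which slot each $\theta_T$, $D_T$, or bracket acts on, so that the images under $\psi_\mathfrak{L}$ land in the matching slot of the $T'$-coboundary. The final assertion of the theorem then follows formally: $p\circ\partial_T=\partial_{T'}\circ p$ implies $p(Z^{2n-1}_T(\mathfrak{L}',\mathfrak{L}))\subseteq Z^{2n-1}_{T'}(\mathfrak{L}',\mathfrak{L})$ and $p(B^{2n-1}_T(\mathfrak{L}',\mathfrak{L}))\subseteq B^{2n-1}_{T'}(\mathfrak{L}',\mathfrak{L})$, so $p$ descends to the announced homomorphism $p_*\colon H^{2n-1}_T(\mathfrak{L}',\mathfrak{L})\to H^{2n-1}_{T'}(\mathfrak{L}',\mathfrak{L})$.
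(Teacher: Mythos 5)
Your proposal is correct and follows essentially the same route as the paper: both arguments establish $p\circ\partial_T=\partial_{T'}\circ p$ by a term-by-term comparison resting on the intertwining relations $\psi_\mathfrak{L}\theta_T(u,v)=\theta_{T'}(\psi_{\mathfrak{L}'}(u),\psi_{\mathfrak{L}'}(v))\psi_\mathfrak{L}$, $\psi_\mathfrak{L} D_T(u,v)=D_{T'}(\psi_{\mathfrak{L}'}(u),\psi_{\mathfrak{L}'}(v))\psi_\mathfrak{L}$ and on $\psi_{\mathfrak{L}'}$ being a homomorphism of descendent systems (Proposition \ref{prop2113.9}), the only difference being that you expand $p(\partial_T\omega)$ and push $\psi_\mathfrak{L}$ forward, whereas the paper expands $\partial_{T'}(p(\omega))$ and pulls the structure maps back through $\psi_{\mathfrak{L}'}\psi^{-1}_{\mathfrak{L}'}$ --- the same computation read in opposite directions.
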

\begin{proof}
For all $\omega\in C^{2n-1}_T(\mathfrak{L}',\mathfrak{L})$, by Proposition \ref{prop2113.9}, and Eqs. (\ref{2113.2})-(\ref{2113.70}), we have
\begin{align*}
&(\partial_{T'}p(\omega))(u_1,u_2,..., u_{2n-1},u_{2n},u_{2n+1})\\
=&\theta_{T'}(u_{2n},u_{2n+1})p(\omega)(u_1,u_2,..., u_{2n-1})-\theta_{T'}(u_{2n-1},u_{2n+1})p(\omega)(u_1,u_2,..., u_{2n-2},u_{2n})\\
&+\sum\limits_{k=1}^{n}(-1)^{n+k}D_{T'}(u_{2k-1},u_{2k})p(\omega)(u_1,u_2,...,\widehat{u_{2k-1}},\widehat{u_{2k}},..., u_{2n+1})\\
&+\sum\limits_{k=1}^{n}\sum\limits_{j=2k+1}^{2n+1}(-1)^{n+k+1}p(\omega)(u_1,u_2,...,\widehat{u_{2k-1}},\widehat{u_{2k}},...,
[u_{2k-1},u_{2k},u_j]_{T'},..., u_{2n+1})\\
=&\theta_{T'}(u_{2n},u_{2n+1})\psi_\mathfrak{L}\omega(\psi^{-1}_{\mathfrak{L}'}(u_1),\psi^{-1}_{\mathfrak{L}'}(u_2),..., \psi^{-1}_{\mathfrak{L}'}(u_{2n-1}))\\
&-\theta_{T'}(u_{2n-1},u_{2n+1})\psi_\mathfrak{L}\omega(\psi^{-1}_{\mathfrak{L}'}(u_1),
\psi^{-1}_{\mathfrak{L}'}(u_2),..., \psi^{-1}_{\mathfrak{L}'}(u_{2n-2}),\psi^{-1}_{\mathfrak{L}'}(u_{2n}))\\
&+\sum\limits_{k=1}^{n}(-1)^{n+k}D_{T'}(u_{2k-1},u_{2k})\psi_\mathfrak{L}\omega
(\psi^{-1}_{\mathfrak{L}'}(u_1),\psi^{-1}_{\mathfrak{L}'}(u_2),...,\widehat{\psi^{-1}_{\mathfrak{L}'}(u_{2k-1})},
\widehat{\psi^{-1}_{\mathfrak{L}'}(u_{2k})},..., \psi^{-1}_{\mathfrak{L}'}(u_{2n+1}))\\
&+\sum\limits_{k=1}^{n}\sum\limits_{j=2k+1}^{2n+1}(-1)^{n+k+1}\psi_\mathfrak{L}\omega
(\psi^{-1}_{\mathfrak{L}'}(u_1),\psi^{-1}_{\mathfrak{L}'}(u_2),...,\widehat{\psi^{-1}_{\mathfrak{L}'}(u_{2k-1})},
\widehat{\psi^{-1}_{\mathfrak{L}'}(u_{2k})},...,\\
&[\psi^{-1}_{\mathfrak{L}'}(u_{2k-1}),\psi^{-1}_{\mathfrak{L}'}(u_{2k}),\psi^{-1}_{\mathfrak{L}'}(u_j)]_{T},..., \psi^{-1}_{\mathfrak{L}'}(u_{2n+1}))\\
=&\theta_{T'}(\psi_{\mathfrak{L}'}\psi^{-1}_{\mathfrak{L}'}(u_{2n}),
\psi_{\mathfrak{L}'}\psi^{-1}_{\mathfrak{L}'}(u_{2n+1}))
\psi_\mathfrak{L}\omega(\psi^{-1}_{\mathfrak{L}'}(u_1),\psi^{-1}_{\mathfrak{L}'}(u_2),..., \psi^{-1}_{\mathfrak{L}'}(u_{2n-1}))\\
&-\theta_{T'}(\psi_{\mathfrak{L}'}\psi^{-1}_{\mathfrak{L}'}(u_{2n-1}),
\psi_{\mathfrak{L}'}\psi^{-1}_{\mathfrak{L}'}(u_{2n+1}))\psi_\mathfrak{L}\omega(\psi^{-1}_{\mathfrak{L}'}(u_1),
\psi^{-1}_{\mathfrak{L}'}(u_2),..., \psi^{-1}_{\mathfrak{L}'}(u_{2n-2}),\psi^{-1}_{\mathfrak{L}'}(u_{2n}))\\
&+\sum\limits_{k=1}^{n}(-1)^{n+k}D_{T'}(\psi_{\mathfrak{L}'}\psi^{-1}_{\mathfrak{L}'}(u_{2k-1}),
\psi_{\mathfrak{L}'}\psi^{-1}_{\mathfrak{L}'}(u_{2k}))\psi_\mathfrak{L}\omega
(\psi^{-1}_{\mathfrak{L}'}(u_1),\psi^{-1}_{\mathfrak{L}'}(u_2),...,\\
&\widehat{\psi^{-1}_{\mathfrak{L}'}(u_{2k-1})},
\widehat{\psi^{-1}_{\mathfrak{L}'}(u_{2k})},..., \psi^{-1}_{\mathfrak{L}'}(u_{2n+1}))\\
&+\sum\limits_{k=1}^{n}\sum\limits_{j=2k+1}^{2n+1}(-1)^{n+k+1}\psi_\mathfrak{L}\omega
(\psi^{-1}_{\mathfrak{L}'}(u_1),\psi^{-1}_{\mathfrak{L}'}(u_2),...,\widehat{\psi^{-1}_{\mathfrak{L}'}(u_{2k-1})},
\widehat{\psi^{-1}_{\mathfrak{L}'}(u_{2k})},...,\\
&[\psi^{-1}_{\mathfrak{L}'}(u_{2k-1}),\psi^{-1}_{\mathfrak{L}'}(u_{2k}),\psi^{-1}_{\mathfrak{L}'}(u_j)]_{T},..., \psi^{-1}_{\mathfrak{L}'}(u_{2n+1}))\\
=&\psi_{\mathfrak{L}}\theta_{T}(\psi^{-1}_{\mathfrak{L}'}(u_{2n}),\psi^{-1}_{\mathfrak{L}'}(u_{2n+1}))
\omega(\psi^{-1}_{\mathfrak{L}'}(u_1),\psi^{-1}_{\mathfrak{L}'}(u_2),..., \psi^{-1}_{\mathfrak{L}'}(u_{2n-1}))\\
&-\psi_{\mathfrak{L}}\theta_{T}(\psi^{-1}_{\mathfrak{L}'}(u_{2n-1}),\psi^{-1}_{\mathfrak{L}'}(u_{2n+1}))
\omega(\psi^{-1}_{\mathfrak{L}'}(u_1),\psi^{-1}_{\mathfrak{L}'}(u_2),..., \psi^{-1}_{\mathfrak{L}'}(u_{2n-2}),\psi^{-1}_{\mathfrak{L}'}(u_{2n}))\\
&+\sum\limits_{k=1}^{n}(-1)^{n+k}\psi_{\mathfrak{L}}D_{T}(\psi^{-1}_{\mathfrak{L}'}(u_{2k-1}),\psi^{-1}_{\mathfrak{L}'}(u_{2k}))
\omega(\psi^{-1}_{\mathfrak{L}'}(u_1),\psi^{-1}_{\mathfrak{L}'}(u_2),...,\\
&\widehat{\psi^{-1}_{\mathfrak{L}'}(u_{2k-1})},
\widehat{\psi^{-1}_{\mathfrak{L}'}(u_{2k})},..., \psi^{-1}_{\mathfrak{L}'}(u_{2n+1}))\\
&+\sum\limits_{k=1}^{n}\sum\limits_{j=2k+1}^{2n+1}(-1)^{n+k+1}\omega
(\psi^{-1}_{\mathfrak{L}'}(u_1),\psi^{-1}_{\mathfrak{L}'}(u_2),...,\widehat{\psi^{-1}_{\mathfrak{L}'}(u_{2k-1})},
\widehat{\psi^{-1}_{\mathfrak{L}'}(u_{2k})},...,\\
&[\psi^{-1}_{\mathfrak{L}'}(u_{2k-1}),\psi^{-1}_{\mathfrak{L}'}(u_{2k}),\psi^{-1}_{\mathfrak{L}'}(u_j)]_{T},..., \psi^{-1}_{\mathfrak{L}'}(u_{2n+1}))\\
=&\psi_{\mathfrak{L}}(\partial_T\omega(\psi^{-1}_{\mathfrak{L}'}(u_1),...,\psi^{-1}_{\mathfrak{L}'}(u_{2n+1})))\\
=&p(\partial_T\omega)(u_1,...,u_{2n+1}),
\end{align*}
where $u_i\in \mathfrak{L}'$, $i=1,2,...,2n+1$. Thus $p$ is a cochain map, and induces  a homomorphism $p_*$ from the cohomology group $H^{2n-1}_T(\mathfrak{L}',\mathfrak{L})$ to $H^{2n-1}_{T'}(\mathfrak{L}',\mathfrak{L})$.
\end{proof}

\end{document}